\theoremstyle{plain}
\newtheorem{theo}{Theorem}[section]
\newtheorem{prop}[theo]{Proposition}
\newtheorem{lemm}[theo]{Lemma}
\newtheorem{coro}[theo]{Corollary}
\theoremstyle{definition}
\newtheorem*{defi}{Definition}
\newtheorem{exam}[theo]{Example}
\theoremstyle{remark}
\numberwithin{equation}{section}
\newcommand{\field}[1]{\mathbb{#1}}
\newcommand{\C}{\field{C}}
\newcommand{\R}{\field{R}}
\newcommand{\Q}{\field{Q}}
\newcommand{\Z}{\field{Z}}
\renewcommand{\H}{\field{H}}
\renewcommand{\P}{\mathcal P}
\def\T{\mathcal T}
\DeclareMathOperator{\Hom}{Hom}
\DeclareMathOperator{\Lie}{Lie}
\DeclareMathOperator{\Area}{Area}
\DeclareMathOperator{\vol}{vol}
\DeclareMathOperator{\codim}{codim}
\DeclareMathOperator{\Int}{Int}
\DeclareMathOperator{\Td}{Td}
\def\1{\underline{\C}}
\def\Cn{(\C^*)^n}
\def\K{K}
\def\a{\mathbf a}
\def\T{{T_\C}}
\def\m{\mu}
\def\V{\mathcal V}
\def\XR{X_\R}
\def\MR{M_\R}
\def\TR{T_\R}
\begin{document}

\title{Toric Topology}

\author[]{Mikiya Masuda}
\address{Graduate School of Science, Osaka City University}

\maketitle

\section{Introduction}

Toric geometry is a bridge connecting algebraic geometry and combinatorics.  Through this bridge, some problems and results in combinatorics can be interpreted in terms of algebraic geometry and vice versa.  For example, counting lattice points in a lattice convex polytope and Kouchnirenko-Bernstein's theorem describing the relation between the number of solutions of a system of Laurent polynomials and Minkowski's mixed volume of the Newton polytopes associated with the equations are such a problem or a result (see \cite{fult93}, \cite{oda88}, \cite{kh-ka08}).  
Among them, the solution of McMullen's conjecture (\cite{mcmu71}) by Stanley \cite{stan80} must have been striking.  McMullen's conjecture says that the face numbers of simplicial polytopes would be characterized by three conditions.  Stanley proved the necessity of these three conditions by applying the Poincar\'e duality theorem (topology), the hard Lefschetz theorem (algebraic geometry) and Macaulay's theorem (commutative algebra) to the projective toric orbifolds associated with the simplicial polytopes.  On the other hand, Billera-Lee \cite{bi-le80} constructed simplicial polytopes satisfying those three conditions almost at the same time, so that McMullen's conjecture was affirmatively solved dramatically.  It was around 1980.  This result is now called the $g$-theorem (see \cite{fult93}, \cite{hibi95}). 

The theory of toric geometry can be developed using topology instead of algebraic geometry to some extent (\cite{da-ja91}, \cite{bu-pa02}, \cite{ha-ma03}, \cite{masu99}).  Namely, one can construct a bridge between topology and combinatorics.  Since the method is topological, the geometrical object treated there need not be algebraic varieties.  Moreover, the combinatorial object which appears is a wider class than that in toric geometry and one can see a natural generalization of results on convex polytopes in combinatorics to the wider class.  One can also see the difference between algebraic geometry and topology through the generalization.  In this article, the author will explain toric topology from this point of view.  Nigel Ray is probably the first person who initiated the word toric topology but he uses this word in a wider meaning than used here.  We restrict our concern to smooth manifolds or orbifolds in this article but Ray uses the word toric topology for the study of not only manifolds or orbifolds but also topological spaces with symmetry (especially torus actions) which fit well to combinatorics.        

Toric topology is an emerging field as a crossroad of several fields.  Ray expresses the relations among these fields in terms of {\it Toric Tetrahedron} (see Figure 1 below).  The four vertices A, C, S, T of the tetrahedron represent four fields Algebraic Geometry，Combinatorics, Symplectic Geometry, Topology respectively and the edges and faces show the relations among the four fields.  The location of the content of this article is an interior point near the edge joining the vertices C and T and close to the vertex T.  So, this article is not comprehensive.  
We refer the reader to the book  (\cite{bu-pa02}) by Buchstaber-Panov，the paper  (\cite{pa-ra08}) by Panov-Ray，the lecture note (\cite{buch08}) by Buchstaber for other viewpoints of toric topology, and to the proceedings \lq\lq Toric Topology" (Harada et al., editors, Contemp. Math. 
460 (2008)), especially the paper \lq\lq An invitation to toric topology: A vertex four of a remarkable 
tetrahedron" by Buchstaber-Ray for a general survey of toric topology.  The article \cite{masu05-1} explains a part of the content of this article to general readers.  

\begin{figure}[h]
\begin{center}
\setlength{\unitlength}{0.6mm}
\begin{picture}(55,60)(10,-10)

\put(0,-20){\includegraphics[scale=0.2]{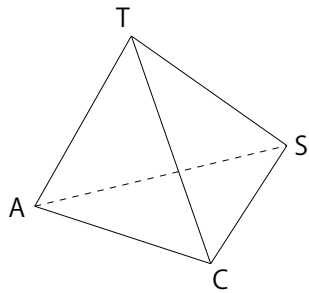}}

\end{picture}
\end{center}
\caption{Toric Tetrahedron}
\end{figure}

\section{Toric geometry}

We review the fundamental theorem in toric geometry.  The standard textbooks 
(\cite{dani78}, \cite{ewal96}, \cite{fult93}, \cite{ishi00}, \cite{oda88}) in toric geometry define the notion of fan and then introduce toric varieties associated with fans, but we take the reverse approach for our later purpose.  In the following, algebraic varieties are considered over the field of complex numbers $\C$ and $\C^*=\C\backslash\{0\}$.

\begin{defi} 
A normal algebraic variety $X$ of complex dimension $n$ with an effective algebraic action of $(\C^*)^n$ is called a {\it toric variety} if the action has an open dense orbit.   
\end{defi}

The point is that the dimension of the acting group is same as the dimension of the space.  There is only one open dense orbit in $X$ which can be identified with the group $(\C^*)^n$ and the other orbits are finitely many and have smaller dimensions.  We often do not mention the action of $(\C^*)^n$ on $X$ explicitly.

$(\C^*)^n$ itself or complex $n$-dimensional faithful representation spaces of $(\C^*)^n$ are toric varieties.  Cartesian products of finitely many toric varieties and quotients of toric varieties by finite subgroups of $(\C^*)^n$ are again toric varieties.  A typical example of a compact smooth toric variety is the complex projective space $\C P^n$.  It is a simple example but useful to understand the general argument, so we shall give it below.  
Henceforth we call a compact smooth toric variety a \emph{toric manifold}.    

\begin{exam} \label{CPn}
We define an action of an element $(g_1,\dots,g_n)$ of $(\C^*)^n$ on $\C P^n$ by 
\[
[z_1,\dots,z_n,z_{n+1}]\mapsto [g_1z_1,\dots,g_nz_n,z_{n+1}]
\]
where $[z_1,\dots,z_n,z_{n+1}]$ denotes the homogeneous coordinate of $\C P^n$.  
$\C P^n$ with this action of $(\C^*)^n$ is a toric manifold. The open dense orbit is the set of points with all the coordinates non-zero.  We denote by $X_i$ the submanifold of $\C P^n$ defined by $z_i=0$ for $i=1,\dots,n+1$.  They have the following properties. 
\begin{enumerate}
\item[(1)] $X_i$'s intersect transversally and the union of $X_i$'s is the complement of the open dense orbit in $\C P^n$,  
\item[(2)] $X_i$ is the closure of an orbit of dimension $n-1$ and fixed pointwise under some $\C^*$-subgroup $T_i$ of $(\C^*)^n$.  In fact, $T_i$ is the $\C^*$-subgroup of $(\C^*)^n$ with the identity except the $i$-coordinate for $1\le i\le n$, and the diagonal $\C^*$-subgroup of $(\C^*)^n$ for $i=n+1$.  
\end{enumerate}
\end{exam}

Here is one more example of toric manifolds. 

\begin{exam}[Bott tower] \label{Bott}
A sequence of $\C P^1$-fiber bundles 
\[
B_n\stackrel{}\longrightarrow B_{n-1}
\stackrel{}\longrightarrow 
\dots \stackrel{}\longrightarrow B_1
\stackrel{}\longrightarrow 
B_0=\{ \text{a point}\}
\]
is called a {\it Bott tower} of height $n$ (\cite{gr-ka94}) and we call the top manifold $B_n$ a {\it Bott manifold}.  Precisely speaking, the $\C P^1$-fibration is the projectivization of the Whitney sum of a complex line bundle $L_k$ and the trivial line bundle over $B_{k-1}$. If 
$B_{k-1}$ admits an action of $(\C^*)^{k-1}$, then the action lifts to that on $L_k$ so that $L_k$ becomes an equivariant line bundle.  This action of $(\C^*)^{k-1}$ together with scalar multiplication of $\C^*$ on fibers of $L_k$ produces an action of    
$(\C^*)^k$ on $B_k$, so that $B_k$ becomes a toric manifold.  $B_2$ is a Hirzebruch surface and if all the line bundles $L_k$ are trivial, then $B_n=(\C P^1)^n$.  

Complex line bundles over a space $B$ bijectively correspond to $H^2(B;\Z)$ through the first Chern class.  Since 
$H^2(B_{k-1};\Z)$ is isomorphic to $\Z^{k-1}$, Bott towers (without actions) can be parameterized by $\bigoplus_{k=1}^n\Z^{k-1}$.  We note that it happens that even if two Bott towers are different, their Bott manifolds can be diffeomorphic or homeomorphic, in other words, a Bott manifold can have different Bott tower structures.  
\end{exam}

We shall review the definition of fans.  Let $N$ be a free abelian group of rank $n$. Then 
$N_\R=N\otimes_\Z\R$ is a real vector space of dimension $n$.  If a cone $\sigma$ in $N_\R$ spanned by a finitely many elements $v_1,\dots,v_k$ in $N$ 
\[
\sigma=\{ r_1v_1+\dots+r_kv_k\mid r_1\ge 0,\dots,r_k\ge 0\}
\]
satisfies $\sigma\cap(-\sigma)=\{0\}$, then it is called \emph{strongly convex rational polyhedral cone}.  If the generators $v_1,\dots,v_k$ are linearly independent over $\Q$, $\sigma$ is called \emph{simplicial}, more strongly, if the generators are a part of a basis of $N$, then $\sigma$ is called \emph{non-singular}.  Faces of $\sigma$ can naturally be defined.  A \emph{fan} in $N$ is a collection of finitely many strongly convex rational polyhedral cones in $N_\R$ satisfying these two conditions: 
\begin{enumerate}
\item[(1)] every face of a cone in $\Delta$ is also a cone in $\Delta$, 
\item[(2)] the intersection of two cones in $\Delta$ is a face of each.  
\end{enumerate}
The dimension of the fan $\Delta$ in $N$ is defined to be the dimension of $N_\R$, that is $n$.  
If every cone in $\Delta$ is non-singular (resp. simplicial), then $\Delta$ is called \emph{non-singular} (resp. \emph{simplicial}). 
If the union of cones in $\Delta$ is the entire space $N_\R$, then the fan $\Delta$ is called \emph{complete}. 
The fans in Figure~\ref{CP2fan} are both non-singular and of dimension 2.  The fan (1) is not complete while the fan (2) is complete. 


\begin{figure}[h]
\begin{center}
\setlength{\unitlength}{0.6mm}
\begin{picture}(55,60)(10,-10)
\put(-40,0){\includegraphics[scale=1.02]{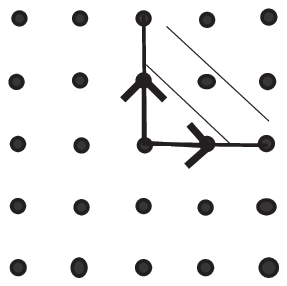}}
\put(30,0){\includegraphics[scale=1.02]{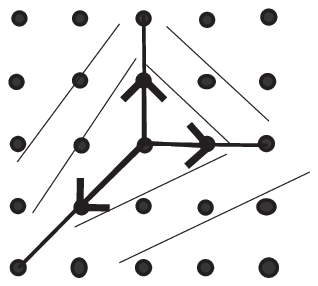}}
\put(3,22){$v_1$}
\put(-20,43){$v_2$}
\put(73,22){$v_1$}
\put(50,43){$v_2$}
\put(41,20){$v_3$}
\put(-15,-10){(1)}
\put(55,-10){(2)}
\end{picture}
\end{center}
\caption{}\label{CP2fan}
\end{figure}


\begin{theo}[Fundamental theorem in toric geometry] \label{fund}
There is a bijective correspondence between isomorphism classes of toric varieties of complex dimension $n$ and fans in $N_\R$ of real dimension $n$.
\end{theo}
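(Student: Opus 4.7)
The plan is to construct explicit recipes in both directions and verify they are mutually inverse, treating the two directions separately.

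From fan to toric variety, fix the dual lattice $M=\Hom(N,\Z)$ and identify the torus as $(\C^*)^n=\Hom(M,\C^*)$. For each cone $\sigma\in\Delta$ I form the dual cone $\sigma^\vee\subset M_\R$ and the semigroup $S_\sigma=\sigma^\vee\cap M$, which is finitely generated by Gordan's lemma, so $U_\sigma=\mathrm{Spec}\,\C[S_\sigma]$ is a normal affine variety of dimension $n$ carrying a natural $(\C^*)^n$-action with an open dense orbit (normality follows from saturatedness of $S_\sigma$). Whenever $\tau$ is a face of $\sigma$ one writes $\tau=\sigma\cap u^\perp$ for some $u\in S_\sigma$, and the identity $S_\tau=S_\sigma+\Z_{\geq 0}(-u)$ exhibits $U_\tau$ as the principal open $\{u\neq 0\}\subset U_\sigma$. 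Condition~(2) in the definition of a fan ensures these inclusions are compatible on triple intersections, so the $U_\sigma$ glue to a normal separated toric variety $X_\Delta$ of dimension $n$.

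Conversely, given a toric variety $X$, identify $N$ with the lattice of algebraic one-parameter subgroups of $(\C^*)^n$. The crucial input is Sumihiro's theorem: every point of a normal toric variety admits a $(\C^*)^n$-stable affine open neighborhood, so I can cover $X$ by finitely many such opens $U_1,\dots,U_r$. I would then show that each $U_i$, being an affine normal toric variety, has coordinate ring $\C[U_i]=\C[S_i]$ for a finitely generated saturated sub-semigroup $S_i\subset M$ whose dual $\sigma_i=S_i^\vee\subset N_\R$ is a strongly convex rational polyhedral cone; strong convexity follows from the existence of a unique closed orbit in $U_i$. Overlaps $U_i\cap U_j$ are again $(\C^*)^n$-stable affine opens and correspond to common faces $\sigma_i\cap\sigma_j$, so adjoining all faces produces a fan $\Delta(X)$.

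To see the two constructions are mutually inverse I would use the orbit-limit criterion: a one-parameter subgroup $\lambda_v$, $v\in N$, extends to a morphism $\C\to U_\sigma$ precisely when $v\in\sigma$, so the cone attached to $U_\sigma$ by the second construction is $\sigma$ itself, and hence the fan of $X_\Delta$ is $\Delta$. In the reverse direction, the affine charts $U_i$ of $X$ are equivariantly isomorphic to the corresponding $U_{\sigma_i}$ and the gluing data agree because both are dictated by the orbit structure, giving an equivariant isomorphism $X\cong X_{\Delta(X)}$. The main obstacle is the variety-to-fan direction: first, the existence of a $(\C^*)^n$-stable affine open cover (Sumihiro's theorem) is nontrivial and uses normality essentially; second, one must show that normality of $U_i$ forces $S_i$ to be saturated and that the resulting dual cone is strongly convex. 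These are the delicate classical facts that make the correspondence genuinely bijective rather than merely a one-sided functoriality.
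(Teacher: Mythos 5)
Your outline is correct, and it is the classical algebro-geometric proof: semigroup algebras and Gordan's lemma for the fan-to-variety direction, Sumihiro's theorem plus the saturation and strong-convexity analysis for the converse, and the one-parameter-subgroup limit criterion to see that the two assignments are mutually inverse. The paper takes a genuinely different (and much lighter) route: it does not prove the theorem, deferring the general correspondence to the standard textbooks, and instead describes only the inverse map $X\mapsto\Delta_X$, and only for compact smooth toric varieties, in topological terms: the closures $X_1,\dots,X_m$ of the codimension-one orbits give the simplicial complex $K_X$ of nonempty intersections, each $X_i$ is fixed pointwise by a one-parameter subgroup $\lambda_{\a_i}(\C^*)$, the sign of the primitive vector $\a_i$ is fixed by demanding that $\lambda_{\a_i}(g)$ act on the normal bundle $\nu_i$ as multiplication by $g$, and the fan is $\{\angle\a_I\mid I\in K_X\}$. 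Your approach buys an actual proof of bijectivity, valid for arbitrary (singular, non-compact) toric varieties, at the cost of the hard input (Sumihiro); the paper's approach buys a concrete orbit/normal-bundle description of the variety-to-fan map, which is exactly what it later generalizes to torus manifolds and multi-fans. Two small points: strong convexity of $\sigma_i$ is better deduced from the fact that the dense torus lies in $U_i$, so the weight semigroup $S_i$ generates $M$ and spans a full-dimensional cone, rather than from uniqueness of the closed orbit (a strongly convex but non-full-dimensional cone also has a unique closed orbit, of positive dimension); and the stated bijection should be read up to equivariant isomorphism, which is what your construction in fact delivers.
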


More precisely speaking, morphism can be defined among toric varieties and fans respectively and there is a functor between the category of toric varieties and the category of fans giving equivalence as categories. 
Table 1 shows some corresponding notions between toric varieties and fans.  

\begin{table}[hbt]
\begin{tabular}{|c|c|} \hline
toric variety & fan \\\hline\hline
compact & complete \\ \hline
orbifold & simplicial \\ \hline
smooth & non-singular \\ \hline
Euler characteristic &  the number of cones of dim $n$ \\\hline
\end{tabular} \vspace{1.25ex}
\caption{A part of dictionary between toric varieties and fans}
\label{dictionary}
\end{table}

We shall give the bijective correspondence in Theorem~\ref{fund}.  As mentioned at the beginning of this section, one usually associates a toric variety with a fan but we shall give the reverse correspondence for toric manifolds (i.e. compact smooth toric varieties).    

Let $X$ be a toric manifold of complex dimension $n$.  Any $X$ has a unique open dense which is isomorphic to $(\C^*)^n$ and the other orbits are finitely many and have smaller dimensions.  Therefore, those orbits of smaller dimensions and their neighborhoods should characterize $X$.  We consider closures of codimension one orbits, denoted $X_1,\dots,X_m$.  Each $X_i$ is a closed connected submanifold of codimension one and they intersect transversally (see Example~\ref{CPn}).  We call $X_i$ a \emph{characteristic submanifold} of $X$.  They are invariant divisors in terms of algebraic geometry.   We will derive two data from them.  In the following $m$ denotes the number of characteristic submanifolds of $X$ and $[m]=\{1,\dots,m\}$. 

\medskip
{\bf Datum 1.}  We consider a family of subsets in $[m]$ 
\[
\K_X:=\{ I\subset [m]\mid \cap_{i\in I}X_i\not=\emptyset\}
\]
recording which $X_i$'s intersect.  Clearly, $K_X$ is an (abstract) simplicial complex.  
Since $X_i$'s intersect transversally, the cardinality of an element in $K_X$ is at most $n$.  On the other hand, $K_X$ has an element of cardinality $n$ because $X$ is compact.   Therefore, the simplicial complex $K_X$ is of dimension $n-1$.  
When $X$ is $\C P^n$ in Example~\ref{CPn}, $\K_X$ is isomorphic to the boundary complex of an $n$-simplex.  When $X$ is $B_n$ in Example~\ref{Bott}, $K_X$ is isomorphic to the join of $n$ number of $S^0$.  

\medskip
{\bf Datum 2.} The set $\Hom(\C^*,\Cn)$ of homomorphisms from $\C^*$ to $(\C^*)^n$ forms an abelian group under the multiplication of $(\C^*)^n$.  The correspondence 
\[
\begin{split}
\Z^n\quad&\to \Hom(\C^*,\Cn)\\
(a_1,\dots,a_n)&\mapsto (z\mapsto (z^{a_1},\dots,z^{a_n}))
\end{split}
\]
gives an isomorphism between $\Z^n$ and $\Hom(\C^*,(\C^*)^n)$.  We denote an element of $\Hom(\C^*,(\C^*)^n)$ corresponding to $\a\in\Z^n$ by $\lambda_{\mathbf a}$ through the isomorphism.  Remember that $X_i$ is fixed pointwise under some $\C^*$-subgroup $T_i$ of $\Cn$.  Therefore, a primitive element $\a_i\in\Z^n$ such that $\lambda_{\a_i}(\C^*)=T_i$ will be determined up to sign.  The problem of which primitive elements we should choose can be solved as follows.  We look at the normal bundle $\nu_i$ of $X_i$.  Since $\nu_i$ is a complex line bundle and $T_i$ fixes the base space $X_i$ of the normal bundle, the action of $T_i$ on $\nu_i$ defined via differential preserves fibers of $\nu_i$.   In fact, 
$\lambda_{\a_i}(g)\in T_i$ ($g\in \C^*$) acts on $\nu_i$ as scalar multiplication by $g$ or $g^{-1}$ on the fibers.  Therefore, if we require that 
the action of $\lambda_{\a_i}(g)$ on $\nu_i$ is just the multiplication by $g$, then $\a_i$ is uniquely determined without ambiguity of sign. 

Using the two data $\K_X$ and $\{\a_1,\dots,\a_m\}$, we define a fan.  To each element $I$ in $\K_X$, we form a cone 
$\angle \a_I:=\{\sum_{i\in I}r_i\a_i\mid r_i\ge 0\}$ in $\Z^n\otimes \R=\R^n$.  The collection of these cones 
$$\Delta_X:=\{\angle \a_I\mid I\in K_X\}$$
is the fan corresponding to the toric manifold $X$.  We may think of the fan $\Delta_X$ as visualization  of the above two data.

\begin{exam} \label{CPnfan}
As for Example~\ref{CPn}, we have $m=n+1$ and $K_X$ consists of all proper subsets of $[n+1]$.  The vector $\a_i$ for $0\le i\le n$ is the $i$th fundamental vector in $\Z^n$ and $\a_{n+1}$ is the vector $(-1,\dots,-1)$.  Therefore, we obtain the fan (2) in Figure~\ref{CP2fan} when $n=2$. 
\end{exam}

\bigskip

\section{Equivariant cohomology}

Equivariant cohomology was introduced by A. Borel (\cite{bore60}) around 1960, so it is often called Borel cohomology.  Equivariant cohomology is a quite useful tool in the study of transformation groups.  In fact, known results such as Smith fixed point theorem were reproved or improved using equivariant cohomology, and further applications of equivariant cohomology were discovered in 1960's and 1970's (\cite{bred72}), \cite{hsia75}).  In 1980's, it was recognized by Atiyah-Bott (\cite{at-bo84}) and Guillemin-Sternberg (\cite{gu-st99}) that equivariant cohomology fits well to the study of transformation groups in symplectic geometry, and it is recently used in the study of Mirror symmetry and Schubert calculus.  In these applications, the localization theorem (see \cite{al-pu93}, \cite{hsia75}) plays an essential role.  

In this section, we will see that equivariant cohomology fits well to the correspondence discussed in the previous section.  In the following, $X$ will be a toric manifold as in the previous section.  We set $\T=(\C^*)^n$ for simplicity and denote a universal principal $\T$-bundle by $E\T\to B\T$, where $E\T$ is a contractible space with a free $\T$-action and $B\T$ is its orbit space.  Explicitly, $E\T=(\C^\infty\backslash\{0\})^n$ and $B\T=(\C P^\infty)^n$.  We consider the diagonal $\T$-action on $E\T\times X$ and the ordinary cohomology of its orbit space $E\T\times_{\T} X$ is called the equivariant cohomology of $X$, that is, 
\[
H^*_{\T}(X):=H^*(E\T\times_{\T} X)
\]
is the equivariant cohomology of $X$.  Equivariant cohomology looks complicated at first glance, but its computation is actually easier than ordinary cohomology by virtue of the localization theorem.   

Remember that the characteristic submanifolds $X_i$'s characterize the toric manifold $X$.  We shall observe that they are closely related to the structure of $H^*_\T(X)$.  

\begin{defi} 
For $i=1,\dots,m$ the inclusion map from $X_i$ to $X$ induces the equivariant Gysin homomorphism $H^q_{\T}(X_i)\to H^{q+2}_{\T}(X)$ and we denote by $\tau_i\in H^2_\T(X)$ the image of $1\in H^0_{\T}(X_i)$ by the homomorphism.  The $\tau_i$ can be thought of as the equivariant Poincar\'e dual to the cycle $X_i$ in $X$.  
\end{defi}

Since the characteristic submanifolds $X_i$ intersect transversally, $\prod_{i\in I}\tau_i$ for $I\in [m]$ is the equivariant Poincar\'e dual of   $\bigcap_{i\in I}X_i$.  In particular, $\prod_{i\in I}\tau_i=0$ when $\bigcap_{i\in I}X_i=\emptyset$ (i.e. $I\notin K_X$).  It turns out that $H^*_\T(X)$ is generated by $\tau_i$'s and there is no other relations among $\tau_i$'s.  Namely we have   

\begin{prop} \label{ring}
$H^*_\T(X)=\Z[\tau_1,\dots,\tau_m]/(\prod_{i\in I}\tau_i \mid I\notin K_X)$ as rings. 
\end{prop}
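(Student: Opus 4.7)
The plan is to construct the natural ring homomorphism $\phi:R\to H^*_\T(X)$ from the Stanley--Reisner ring $R:=\Z[y_1,\dots,y_m]/(\prod_{i\in I}y_i : I\notin K_X)$ sending $y_i\mapsto \tau_i$, and to show it is an isomorphism. Well-definedness is precisely the remark in the paragraph above the proposition: transversality of the characteristic submanifolds makes $\prod_{i\in I}\tau_i$ the equivariant Poincar\'e dual of $\bigcap_{i\in I}X_i$, which vanishes whenever $I\notin K_X$.

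For surjectivity I would work via an equivariant cell decomposition. A generic one-parameter subgroup $\lambda:\C^*\to\T$ has fixed set equal to $X^\T$, consisting of isolated points $p_\sigma$ in bijection with the maximal simplices $\sigma$ of $K_X$. The Bia\l ynicki--Birula flow of $\lambda$ gives $X$ the structure of a $\T$-equivariant CW complex with cells only in even real dimensions, and each cell closure $\overline{C_\sigma}$ is a transverse intersection of characteristic submanifolds. The resulting equivariant cellular spectral sequence degenerates for parity reasons, so $H^*_\T(X)$ is free over $H^*(B\T)$ with basis the equivariant Thom classes of the $\overline{C_\sigma}$; each such Thom class is a monomial in the $\tau_i$'s, so every basis element lies in the image of $\phi$.

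Injectivity is the main obstacle, and I would handle it by comparing graded Hilbert series. The parity argument above yields equivariant formality, so the Hilbert series of $H^*_\T(X)$ equals $P_X(t)/(1-t^2)^n$, where $P_X$ is the Poincar\'e polynomial of $X$. On the other hand, a direct combinatorial computation gives the Hilbert series of $R$ as $h_{K_X}(t^2)/(1-t^2)^n$, where $h_{K_X}$ is the $h$-polynomial of $K_X$. The classical unequivariant Danilov--Jurkiewicz theorem identifies $H^*(X)$ with $R$ modulo a linear regular sequence determined by the characteristic vectors $\a_i$, which yields $P_X(t)=h_{K_X}(t^2)$ and hence equality of Hilbert series. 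Since a surjective graded homomorphism between graded abelian groups of finite type with the same Hilbert series is automatically an isomorphism, $\phi$ is an isomorphism. The essential non-formal input is the Danilov--Jurkiewicz identification, whose proof uses either an inductive argument on an affine open cover of $X$ indexed by the fan together with Mayer--Vietoris, or a Morse-theoretic count via the moment map; once it is in hand, the equivariant statement follows.
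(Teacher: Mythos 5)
Note first that the paper does not actually prove Proposition~\ref{ring}; it is stated as a known result (the equivariant Danilov--Jurkiewicz/face-ring description), so your sketch has to be judged on its own terms. Most of it is sound: the well-definedness of $\phi$, the Bia\l ynicki--Birula (or Fulton-style) decomposition into even-dimensional invariant cells whose closures are orbit closures $V(\tau)=\bigcap_{i\in\tau(1)}X_i$ and hence have monomial equivariant Thom classes, the resulting freeness of $H^*_\T(X)$ over $H^*(B\T)$, and the rank count identifying the Hilbert series of the face ring with $P_X(t)/(1-t^2)^n$ (both sides being degreewise free, so a surjection of equal ranks is an isomorphism).

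The genuine gap is at surjectivity. Your cell decomposition shows that the monomial Thom classes form an $H^*(B\T)$-\emph{module} basis of $H^*_\T(X)$, i.e.\ that the $\tau_i$ generate $H^*_\T(X)$ as an algebra over $H^*(B\T)$; it does not show that they generate it as a ring, which is what surjectivity of $\phi$ requires. The image of $\phi$ is a subring, not a priori an $H^*(B\T)$-submodule, and it is one exactly when $\pi^*(H^2(B\T))$ lies in the $\Z$-span of $\tau_1,\dots,\tau_m$ --- that is, you need Proposition~\ref{vi}, which the paper proves only by \emph{assuming} Proposition~\ref{ring}, so you cannot quote it; you must prove it independently, e.g.\ by restricting $\pi^*(u)-\sum_i\langle u,v_i\rangle\tau_i$ to the fixed points (where $\tau_i|_p$ is the equivariant Euler class of $\nu_i|_p$ or $0$, and the weights at $p$ are dual to the $v_i$, $i\in I$) and using injectivity of the restriction to $X^\T$, which your parity argument already supplies. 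Note also that only $b_2(X)=m-n$ of the $\tau_i$ occur among your basis monomials in degree $2$, so the missing $n$ directions of $H^2_\T(X)$ are precisely $\pi^*H^2(B\T)$; without the identity above, $\phi$ could fail to hit them. A secondary, logical remark: invoking the non-equivariant Danilov--Jurkiewicz theorem is backwards relative to this paper, where Theorem~\ref{tcring} is \emph{deduced} from Propositions~\ref{ring} and \ref{vi}; it is not circular if you import an independent proof, but it is also unnecessary, since the equality $P_X(t)=h_{K_X}(t^2)$ (Lemma~\ref{hi}) follows directly by counting the cells of the very decomposition you constructed.
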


This proposition says that $H^*_\T(X)$ is the face ring (or Stanley-Reisner ring) of the simplicial complex $K_X$.  
In fact, the information of $H^*_\T(X)$ as a ring is equivalent to that of the simplicial complex $K_X$. 

The projection from $E\T\times X$ onto the first factor $E\T$ induces a fiber bundle 
\begin{equation} \label{fiber}
X\stackrel{\iota}\longrightarrow E\T\times_\T X \stackrel{\pi}\longrightarrow E\T/\T=B\T. 
\end{equation}
To see $H^*_\T(X)$ as a ring means to see the total space of the fibration and overlooks how this fiber bundle is twisted, in other words, does not look at the projection $\pi$.  However, $H^*_\T(X)$ is not only a ring but also an algebra over $H^*(B\T)$ through $\pi^*\colon H^*(B\T)\to H^*_\T(X)$.  This algebra structure must catch how the fiber bundle is twisted.  Since $H^*(B\T)$ is a polynomial ring generated by $H^2(B\T)$, it suffice to see the image of $H^2(B\T)$ by $\pi^*$ to know the algebra structure of $H^*_\T(X)$ over $H^*(B\T)$.  The following proposition describes the image.     
  
\begin{prop} \label{vi}
To each $i=1,\dots,m$, there is a unique $v_i\in H_2(B\T)$ such that the identity
\[
\pi^*(u)=\sum_{i=1}^m\langle u,v_i\rangle \tau_i \quad(\forall u\in H^2(B\T))
\]
holds, where $\langle\ ,\ \rangle$ denotes the paring between cohomology and homology.  
\end{prop}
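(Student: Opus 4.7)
The plan is to read off the statement directly from Proposition~\ref{ring} and the degree-$2$ freeness of the face ring, then invoke universal coefficients to repackage the coefficients as homology classes.

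First I would observe that every singleton $\{i\}$ lies in $K_X$ (since each characteristic submanifold $X_i$ is non-empty), so the minimal non-faces of $K_X$ have cardinality at least $2$. Consequently the Stanley--Reisner ideal $(\prod_{i\in I}\tau_i\mid I\notin K_X)$ has all its generators in degree $\ge 4$, and hence by Proposition~\ref{ring} the group $H^2_\T(X)$ is \emph{freely} generated over $\Z$ by $\tau_1,\dots,\tau_m$. This is the crucial step that makes everything else formal.

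Next, given $u\in H^2(B\T)$, the element $\pi^*(u)\in H^2_\T(X)$ can be written in exactly one way as $\pi^*(u)=\sum_{i=1}^m c_i(u)\tau_i$ with $c_i(u)\in\Z$. Linearity of $\pi^*$ implies that each $c_i\colon H^2(B\T)\to\Z$ is a group homomorphism. Since $B\T=(\C P^\infty)^n$ has $H^*(B\T)$ a polynomial ring on $n$ degree-$2$ generators, $H^2(B\T)$ is a free abelian group of rank $n$, so by universal coefficients $\Hom(H^2(B\T),\Z)\cong H_2(B\T)$. This yields a unique $v_i\in H_2(B\T)$ with $c_i(u)=\langle u,v_i\rangle$ for all $u$, giving both existence and uniqueness of the $v_i$.

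There is no serious obstacle here; the only point requiring a little care is verifying that $H^2_\T(X)$ really is free on the $\tau_i$'s (so that the coefficients $c_i(u)$ are well-defined), which as noted reduces to the observation that singletons are faces of $K_X$. Once that is in hand, the rest is a one-line rewriting using the pairing between $H^2(B\T)$ and $H_2(B\T)$.
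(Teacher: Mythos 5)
Your proof is correct and follows essentially the same route as the paper's: both deduce from Proposition~\ref{ring} that $H^2_\T(X)$ is free abelian on $\tau_1,\dots,\tau_m$, write $\pi^*(u)$ uniquely in this basis, use linearity in $u$, and then identify the resulting coefficient functionals with elements of $H_2(B\T)$ via the pairing. Your extra remark that the Stanley--Reisner relations start in degree $\ge 4$ (because singletons are faces of $K_X$) is just an explicit justification of the freeness step that the paper takes for granted.
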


\begin{proof} 
The proof is easy if we admit Proposition~\ref{ring}.  By Proposition~\ref{ring}, $H^2_\T(M)$ is a free abelian group generated by $\tau_1,\dots,\tau_m$, so one can express $\pi^*(u)=\sum_{i=1}^mv_i(u)\tau_i$ with a unique integer $v_i(u)$ depending on $u$ for each $i$.  Since $\pi^*$is a homomorphism, $v_i(u)$ is linear with respect to $u$.  Therefore, $v_i$ can be regarded as an element of $H_2(B\T)$ dual to $H^2(B\T)$  so that  
$v_i(u)=\langle u,v_i\rangle$.
\end{proof}

A homomorphism from $\C^*$ to $\T$ induces a continuous map $B\C^*\to B\T$ between their classifying spaces and since $B\C^*=\C P^\infty$, $H_2(B\C^*)$ is an infinite cyclic group.  Therefore, once we fix a generator of $H_2(B\C^*)$, we obtain an element of $H_2(B\T)$ as the image of the fixed generator by the homomorphism induced by the continuous map. This gives an isomorphism  
\begin{equation} \label{HomST}
H_2(B\T)\cong\Hom(\C^*,\T). 
\end{equation}
We will denote by $\lambda_v$ the element in $\Hom(\C,\T)$ corresponding to $v\in H_2(B\T)$ through the isomorphism above.  

\begin{lemm}
$\lambda_{v_i}(\C^*)$ is the $\C^*$-subgroup which fixes $X_i$ pointwise.  To be more precise, if $\a_i$ is the vector in $\Z^n$ introduced in Datum 2 in the previous section, then $\lambda_{\a_i}=\lambda_{v_i}$.  Namely, if we identify $H_2(B\T)$ with $\Z^n$, then $v_i=\a_i$.  
\end{lemm}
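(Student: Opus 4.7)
The plan is to localize the identity in Proposition~\ref{vi} at a $\T$-fixed point of $X_i$ and compare the resulting linear relation with the defining property of $\a_i$. I would fix a $\T$-fixed point $p\in X_i$; one exists because $X_i$ is a nonempty compact $\T$-invariant submanifold. By transversality and smoothness, exactly $n$ characteristic submanifolds pass through $p$, say $X_{i_1},\dots,X_{i_n}$ with $i_1=i$, and the tangent space decomposes as $T_pX=\bigoplus_{k=1}^n N_k$ where $N_k=\nu_{i_k}|_p$. Let $\chi_k\in H^2(B\T)$ be the $\T$-weight of the line $N_k$; because $X$ is smooth, $\chi_1,\dots,\chi_n$ form a $\Z$-basis of $H^2(B\T)$.

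Next I would apply the restriction $\iota_p^*\colon H^2_\T(X)\to H^2_\T(\{p\})=H^2(B\T)$ to both sides of the identity in Proposition~\ref{vi}. Since $\iota_p$ is $\T$-equivariant and $p$ is fixed, $\pi\circ\iota_p$ is homotopic to the identity of $B\T$, so the left side becomes $u$. For the right side, identifying $\tau_j$ with $c_1^{\T}(\mathcal{O}(X_j))$: when $j=i_k$ the self-intersection formula yields $\iota_p^*\tau_{i_k}=c_1^{\T}(\nu_{i_k}|_p)=\chi_k$; when $p\notin X_j$ a $\T$-invariant tubular neighborhood of $X_j$ disjoint from $p$ supports a Thom-class representative of $\tau_j$ that vanishes near $p$, so $\iota_p^*\tau_j=0$. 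The identity therefore becomes
\[
u=\sum_{k=1}^n\langle u,v_{i_k}\rangle\,\chi_k\qquad\text{for every }u\in H^2(B\T),
\]
and duality against the basis $\{\chi_k\}$ forces $\langle\chi_j,v_{i_k}\rangle=\delta_{jk}$.

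Finally, $\a_{i_k}$ satisfies exactly the same pairings: because $T_{i_k}=\lambda_{\a_{i_k}}(\C^*)$ fixes $X_{i_k}$ pointwise it acts trivially on $T_pX_{i_k}=\bigoplus_{l\ne k}N_l$, giving $\langle\chi_l,\a_{i_k}\rangle=0$ for $l\ne k$, and the Datum 2 convention that $T_{i_k}$ acts on $\nu_{i_k}$ as multiplication by $g$ forces $\langle\chi_k,\a_{i_k}\rangle=1$. Since the $\chi_k$'s are a basis of $H^2(B\T)$, they separate elements of $H_2(B\T)$, so $v_{i_k}=\a_{i_k}$, and then $\lambda_{v_i}(\C^*)=\lambda_{\a_i}(\C^*)=T_i$ fixes $X_i$ pointwise. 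The main obstacle is the clean verification of the two restriction formulas $\iota_p^*\tau_{i_k}=\chi_k$ and $\iota_p^*\tau_j=0$ for $p\notin X_j$; both hinge on the natural equivariant structure on $\mathcal{O}(X_j)$ (for which the canonical section has weight zero), after which the remainder of the argument is elementary linear algebra over $H_2(B\T)\cong\Z^n$.
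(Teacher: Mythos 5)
The paper itself states this lemma without proof (it is a survey and the verification is left to the reader), so there is no in-text argument to compare yours against; judged on its own, your localization argument is correct and is the standard way to pin down the $v_i$. Restricting the identity of Proposition~\ref{vi} to a $\T$-fixed point $p\in X_i$, using $\iota_p^*\pi^*=\mathrm{id}$, $\iota_p^*\tau_{i_k}=\chi_k$ (equivariant Euler class of $\nu_{i_k}|_p$) and $\iota_p^*\tau_j=0$ for $p\notin X_j$, does force $\langle\chi_j,v_{i_k}\rangle=\delta_{jk}$, and the Datum~2 normalization gives the same pairings for $\a_{i_k}$, whence $v_{i_k}=\a_{i_k}$ since the weights $\chi_1,\dots,\chi_n$ form a basis of $H^2(B\T)$ by smoothness; this also recovers $\lambda_{v_i}(\C^*)=T_i$. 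This fixed-point restriction is exactly the mechanism the paper alludes to when it says the $H^*(B\T)$-algebra structure of $H^*_\T(X)$ recovers Datum~2, so your route is the natural one. Two small points to tighten: the existence of a $\T$-fixed point in $X_i$ does not follow merely from $X_i$ being a nonempty compact invariant submanifold (a free orbit would be a counterexample); say instead that $X_i$ is itself a compact toric variety of dimension $n-1$ (the closure of a codimension-one orbit), hence has a fixed point, equivalently that every ray of a complete fan lies in some $n$-dimensional cone. Also make explicit that $T_pX_{i_k}=\bigoplus_{l\neq k}N_l$ because this is the unique $\T$-invariant complement of $N_k$ (the weights being distinct), which is what justifies $\langle\chi_l,\a_{i_k}\rangle=0$ for $l\neq k$.
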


Proposition~\ref{vi} and the lemma above tell us that the algebra structure of $H^*_\T(X)$ over $H^*(B\T)$ contains Datum 2 in the previous section.  This together with Proposition~\ref{ring} shows that one can reproduce the fan of $X$ from $H^*_\T(X)$.  This argument also says that the free abelian group $N$ used to define a fan should be $H_2(B\T)$ if we use equivariant cohomology or view toric geometry from a topological point of view.  

Since $H^{\rm{odd}}(B\T)=0$ and $X$ is compact and smooth, we have $H^{\rm{odd}}(X)=0$.  Therefore the Leray-Serre spectral sequence of the fiber bundle \eqref{fiber} collapses, so 
\begin{equation} \label{free}
\text{$H^*_\T(X)=H^*(B\T)\otimes H^*(X)$ as a module over $H^*(B\T)$.}
\end{equation}
This implies that the restriction map $\iota^*\colon H^*_\T(X)\to H^*(X)$ is surjective and its kernel is the ideal generated by the image of 
$H^{>0}(B\T)$ by $\pi^*$.  Moreover, since $H^*(B\T)$ is a polynomial ring generated by elements in $H^2(B\T)$, we obtain the following theorem from Propositions~\ref{ring} and \ref{vi}. 

\begin{theo}[Danilov-Jurkiewicz] \label{tcring}
We set $\iota^*(\tau_i)=\mu_i$.  Then 
$H^*(X)=\Z[\mu_1,\dots,\mu_m]/\mathcal I$.  Here $\mathcal I$ is the ideal generated by the following two types of elements: 
\begin{enumerate}
\item[(1)] $\prod_{i\in I}\mu_i \quad (I\notin K_X)$,
\item[(2)] $\sum_{i=1}^m\langle u,v_i\rangle \mu_i \quad(\forall u\in H^2(B\T))$. 
\end{enumerate}
\end{theo}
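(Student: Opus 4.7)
The plan is to combine the two preceding structural results (Propositions~\ref{ring} and~\ref{vi}) with the collapse of the Leray-Serre spectral sequence of the fibration \eqref{fiber}, and then simply quotient out by the fiber inclusion. The assertion in \eqref{free} that $H^*_\T(X)$ is a free $H^*(B\T)$-module does most of the algebraic work; what remains is to identify the kernel of the restriction $\iota^*\colon H^*_\T(X)\to H^*(X)$ explicitly in terms of the generators $\tau_i$.

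First I would observe that because $H^*_\T(X)\cong H^*(B\T)\otimes H^*(X)$ as an $H^*(B\T)$-module, the restriction $\iota^*$ is a surjective ring homomorphism, and its kernel is precisely the ideal generated by the image of $\pi^*\colon H^{>0}(B\T)\to H^*_\T(X)$. Since $H^*(B\T)$ is a polynomial ring generated in degree two, this ideal is already generated by $\pi^*(H^2(B\T))$. Applying Proposition~\ref{vi}, the image $\pi^*(H^2(B\T))$ is spanned exactly by the elements $\sum_{i=1}^m\langle u,v_i\rangle\tau_i$ as $u$ ranges over $H^2(B\T)$; in fact it suffices to let $u$ run over a basis of $H^2(B\T)$, yielding $n$ such linear relations.

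Next I would invoke Proposition~\ref{ring} to present $H^*_\T(X)$ as $\Z[\tau_1,\dots,\tau_m]/(\prod_{i\in I}\tau_i\mid I\notin K_X)$. Combining this presentation with the surjectivity of $\iota^*$ and the identification of $\ker\iota^*$ above, and setting $\mu_i:=\iota^*(\tau_i)$, one obtains
\[
H^*(X)\;=\;H^*_\T(X)\big/\ker\iota^*\;=\;\Z[\mu_1,\dots,\mu_m]\big/\mathcal I,
\]
where $\mathcal I$ is generated by the images of the two families of relations, namely the monomial relations (1) coming from $K_X$ and the linear relations (2) coming from Proposition~\ref{vi}.

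The only nontrivial input is the freeness statement \eqref{free}, i.e.\ the collapse of the Leray-Serre spectral sequence, which the excerpt already deduces from $H^{\mathrm{odd}}(X)=0$. I therefore expect no serious obstacle: once one accepts Propositions~\ref{ring} and~\ref{vi} and the vanishing of odd-degree cohomology, the Danilov-Jurkiewicz presentation is a formal consequence of quotienting $H^*_\T(X)$ by the ideal generated by $\pi^*(H^2(B\T))$. The one point that deserves a sentence of care is verifying that the images of the relations $\prod_{i\in I}\tau_i=0$ for $I\notin K_X$ give no redundancy after passing to $H^*(X)$, but this is automatic because $\iota^*$ sends these relations to the analogous monomial relations in the $\mu_i$.
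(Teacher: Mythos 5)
Your proposal is correct and follows essentially the same route as the paper: the collapse of the Leray--Serre spectral sequence gives the freeness statement \eqref{free}, from which the surjectivity of $\iota^*$ and the identification of $\ker\iota^*$ as the ideal generated by $\pi^*(H^2(B\T))$ follow, and combining this with Propositions~\ref{ring} and~\ref{vi} yields the presentation. Nothing in your argument deviates from or adds to what the paper itself does.
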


It becomes clear why the two types of elements above appear if we use equivariant cohomology.   

We said that we can reproduce the fan of $X$ from $H^*_\T(X)$, but this is slightly misleading because the specified elements $\tau_i$'s are used in Propositions~\ref{ring} and \ref{vi}.  It is not obvious that one can still reproduce if we regard $H^*_\T(X)$ as an abstract algebra over $H^*(B\T)$.  Namely, the question is whether two toric manifolds $X$ and $X'$ are isomorphic if $H^*_\T(X)$ and $H^*_\T(X')$ are isomorphic as algebras over $H^*(B\T)$, but it turns out that this is almost the case (\cite{masu08}).

\section{Torus manifold}

The story in Sections 2 and 3 for toric geometry can be developed in the category of topology to some extent.  Since the compact torus $T=(S^1)^n$   is a deformation retract of $\T=\Cn$, the equivariant cohomology $H^*_T(X)$ of $X$ with the restricted action of $T$ is isomorphic to $H^*_\T(X)$.  Therefore, the argument developed in the previous section works for $T$-actions.     

In the following, we will treat only compact and smooth manifolds for simplicity but the argument below works for non-compact manifolds or orbifolds with suitable modification.  Let $M$ be a smooth orientable closed manifold of dimension $2n$ with an effective smooth action of $T=(S^1)^n$ having a fixed point.  The fixed point set consists of finitely many isolated points.  We call such a manifold $M$ a \emph{torus manifold} (\cite{ha-ma03})\footnote{In \cite{ha-ma03}, an omniorientation introduced later is incorporated in the definition of torus manifold.}. 
A toric manifold (i.e. a compact smooth toric variety) with the restricted action of $T$ is a torus manifold but there are many torus manifolds which are not toric manifolds.  

\begin{exam} \label{exam12}
Let $(g_1,\dots,g_n)$ be an element of $T$.   

(1) The $2n$-dimensional unit sphere $S^{2n}$ in $\C^n\times\R$ with the $T$-action defined by 
\[
(z_1,\dots,z_n,y)\to (g_1z_1,\dots.g_nz_n,y)
\]
is a torus manifold but this is not a toric manifold when $n\ge 2$. 

(2) The orbit space of the standard complex $n$-dimensional representation of $T$ 
\[
(z_1,\dots,z_n)\in \C^n \to (g_1z_1,\dots,g_nz_n)\in \C^n
\]
can be identified with the orthant $(\R_{\ge 0})^n$ of $\R^n$ by taking the modulus of the coordinates of $\C^n$.  
Any faithful complex $n$-dimensional representation of $T$ is obtained from the standard representation by composing an automorphism of $T$, so its orbit space can also be identified with the orthant $(\R_{\ge 0})^n$.  Therefore, if a torus manifold $M$ is locally equivariantly homeomorphic to a complex $n$-dimensional faithful representation space of $T$, then the orbit space $M/T$ becomes a manifold with corners and $M$ is called a \emph{quasitoric manifold} if the orbit space $M/T$ is a simple convex polytope (see Section 5)\footnote{The notion of quasitoric manifold was introduced by Davis-Januszkiewicz
(\cite{da-ja91}) and they used the word \lq\lq toric manifold" for quasitoric manifold.  But the word toric manifold was already used in algebraic geometry as smooth toric variety, so Buchstaber-Panov(\cite{bu-pa02}) started using the word quasitoric manifold to avoid confusion. }.  
Davis-Januszkiewicz (\cite{da-ja91}) have developed a theory similar to toric geometry for quasitoric manifolds using topological technique.  Any toric manifold of complex dimension less than or equal to 3 is a quasitoric manifold but it is not known whether there is a toric manifold which is not quasitoric manifold, in other words, whether the orbit space of a toric manifold by the restricted compact torus action is a simple polytope as a manifold with corners.  On the other hand, there are many quasitoric manifolds which are not toric manifolds. For example,   $\C P^2\#\C P^2$ is a quasitoric manifold but not a toric manifold because it does not admit a complex (even almost complex) structure.\footnote{Added in the English translation:  Recently, the notion of a \emph{topological toric manifold} has been introduced in \cite{ih-fu-ma10}.  It is a closed smooth manifold of dimension $2n$ with an effective smooth action of $(\C^*)^n$ which is locally equivariantly diffeomorphic to sum of complex one-dimensional \emph{smooth} representation spaces of $(\C^*)^n$.  The family of topological toric manifolds contains both toric manifolds and quasitoric manifolds.}    
\end{exam}

We have assigned a fan to a toric manifold in Section 2 but the argument works even if we replace $\T$ by $T$, so it turns out that one can assign a combinatorial object like a fan to a torus manifold.  We shall explain this in more details.  
A torus manifold $M$ has finitely many closed codimension 2 submanifolds fixed pointwise under some $S^1$-subgroups.  We denote them by  
$M_i$ $(i=1,\dots,m)$.  When $M$ is a toric manifold $X$, these $M_i$ are nothing but the characteristic submanifolds $X_i$.  Because of this reason, we will call $M_i$'s the characteristic submanifolds of $M$.  Similarly to Datum 1 in Section 2, we obtain an (abstract) simplicial complex 
\[
K_M:=\{ I\subset [m]\mid \cap_{i\in I}M_i\not=\emptyset\}. 
\]

In order to obtain Datum 2 in Section 2 for $M$, we need to take orientations on $M$ and $M_i$'s into account.  Since $M$ is assumed to be orientable and each $M_i$ is fixed pointwise under some $S^1$-subgroup of $T$, $M_i$ is also orientable.  A choice of orientations on $M$ and $M_i$'s is called an \emph{omniorientation} on $M$.  We fix an omniorientation on $M$.  It determines a compatible orientation on the normal bundle $\nu_i$ of $M_i$.  Since $\nu_i$ is of real dimension 2, the compatible orientation (together with a $T$-invariant fiber metric) on $\nu_i$ defines a complex structure on $\nu_i$.  Therefore, an element   
$v_i\in H_2(BT)$ can be associated to $M_i$ by a similar argument in Sections 2 and 3.  The element $\lambda_{v_i}$ in $\Hom(S^1,T)$ corresponding to $v_i$ satisfies the following: 
\begin{enumerate}
\item[(1)] $\lambda_{v_i}(S^1)$ fixes $M_i$ pointwise, 
\item[(2)] $\lambda_{v_i}(g)_*(\xi)=g\xi$ $(g\in S^1,\ \xi\in \nu_i)$, 
\end{enumerate}
where $\lambda_{v_i}(g)_*$ in (2) above denotes the differential and $g\xi$ the complex scalar multiplication by $g$ on $\xi$. 

Using these two data $K_M$ and $\{v_i\}_{i=1}^m$, we form cones like we did in Section 2.  Then the cones can overlap as is shown in Figure~\ref{degree2}, where the two dimensional cones are spanned by $v_i$ and $v_{i+1}$ $(i=1,\dots,7)$（$v_8=v_1$） and they are all non-singular.  

\vspace{0.5cm}

\begin{figure}[h]
\begin{center}
\setlength{\unitlength}{1.0mm}
\begin{picture}(55,30)(5,10)
\put(0,10){\includegraphics[scale=1.7]{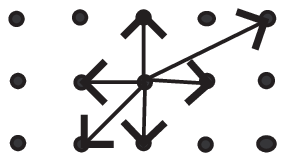}}
\put(43,22){\large{$v_1=v_4$}}
\put(31,43){\large{$v_2$}}
\put(12,12){\large{$v_3$}}
\put(55,40){\large{$v_5$}}
\put(9,31){\large{$v_6$}}
\put(31,13){\large{$v_7$}}
\end{picture}
\end{center}
\caption{}\label{degree2}
\end{figure}

In the case of torus manifolds, we derive one more datum. The simplicial complex $K_M$ is of dimension $n-1$ and if $I$ is a simplex of dimension $n-1$ in $K_M$, then $I$ consists of $n$ elements and $M_I:=\bigcap_{i\in I}M_i$ consists of finitely many $T$-fixed points.  For each $p\in M_I$, the tangent space $\tau_pM$ of $M$ at $p$ decomposes as follows: 
\[
\tau_pM=\bigoplus_{i\in I}\nu_i|_p
\]
where $\nu_i|_p$ denotes the restriction of $\nu_i$ to the point $p$.  Therefore $\tau_pM$ have two orientations: one is the orientation determined from the orientation on $M$ and the other is the orientation determined from the right hand side of the above identity.  
We define $\epsilon(p)=+1$ when these orientations agree and $\epsilon(p)=-1$ otherwise, and then define 
\[
\begin{split}
w^+(I):&=\#\{p\in M_I\mid \epsilon(p)=+1\}\\
w^-(I):&=\#\{p\in M_I\mid \epsilon(p)=-1\}\\ 
w(I):&=w^+(I)-w^-(I)=\sum_{p\in M_I}\epsilon(p).
\end{split}
\]
When $M$ is a toric manifold, $M$ consists of one point and $w(I)=1$.  However, $M_I$ can be more than one point and $w(I)$ can be different from $1$ for a general torus manifold $M$.  For example, if  $M=S^{2n}$ $(n\ge 2)$, then $M_I$ consists of two points and $w^+(I)=w^-(I)=1$ and hence $w(I)=0$.  

We call the triple $\Delta_M:=(K_M,\{v_i\}_{i=1}^m,w^\pm)$ the \emph{multi-fan} of the omnioriented torus manifold $M$.  When we form cones using $K_M$ and $\{v_i\}_{i=1}^m$, 
the functions $w^\pm$ and $w$ assign integers (or weights) to each cone of maximal dimension $n$.  In the case where $M$ is a toric manifold, $w^+=1$ and $w^-=0$ for every $n$-dimensional cone so that $w=1$.  

The cones in the multi-fan $\Delta_M$ do not overlap randomly.  The following theorem implies that the overlapping degree of the cones is controlled by the Todd genus of $M$.  

\begin{theo}[\cite{masu99}] \label{theo:Todd}
Let $v$ be an element of $H_2(BT)$ which is not contained in any cone of dimension $n-1$ in the multi-fan $\Delta_M$.  If $M$ admits a $T$-invariant weakly complex structure\footnote{This means that the Whitney sum of the tangent bundle of $M$ and a trivial vector bundle of certain dimension admits a $T$-invariant complex structure.}, then 
\begin{equation} \label{Todd}
\text{the Todd genus of $M$}=\sum_{I\in K_M, s.t.\ v\in \angle v_I}w(I).
\end{equation}
\end{theo}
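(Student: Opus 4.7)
The plan is to apply equivariant localization to the Todd operator via the $S^1$-subgroup $\lambda_v$ and then read off the sum from the multi-fan data. First I set up the fixed-point data: since $v$ lies in no $(n-1)$-dimensional cone of $\Delta_M$, every tangential weight at every $T$-fixed point pairs nontrivially with $v$, so $\lambda_v(S^1)$ has the same fixed set as $T$, namely $M^T = \bigsqcup_{I} M_I$ with $I$ ranging over the maximal simplices of $K_M$. At a point $p \in M_I$, the decomposition $\tau_pM = \bigoplus_{i \in I} \nu_i|_p$ together with the defining property $\lambda_{v_i}(g)_*\xi = g\xi$ on $\nu_i|_p$ forces the tangential weights $\{\alpha_i^p\}_{i \in I}$ to be the basis of $H^2(BT)$ dual to $\{v_i\}_{i \in I}$. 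Writing $v = \sum_{i \in I} r_i^p v_i$, the $\lambda_v$-weight on $\nu_i|_p$ equals $r_i^p$; by genericity each $r_i^p$ is nonzero, and all of them are positive precisely when $v$ lies in the interior of $\angle v_I$.

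Next I would invoke the Atiyah--Bott--Segal--Singer holomorphic Lefschetz formula applied to the weakly complex $T$-manifold $M$: as a rational function on $T$ the equivariant Todd genus localizes as
\[
\sum_{p \in M^T} \epsilon(p) \prod_{j=1}^n \frac{1}{1 - e^{-\alpha_j^p}},
\]
the sign $\epsilon(p)$ reconciling the ambient orientation of $M$ with the one induced by the local complex structure on $\tau_pM$. Rigidity identifies this rational function with the constant $\Td(M)$. Restricting to the one-parameter subgroup $\lambda_v$ gives an identity of rational functions in a single complex parameter $t$; a Kosniowski-style Laurent expansion at $t=0$ shows that the local factor at $p$ contributes a finite, nonzero limit only when every $\lambda_v$-weight at $p$ has the same sign, in which case its leading term is $\epsilon(p)$ (with the sign convention for $t$ chosen so that ``positive weights survive''), while all other local factors are cancelled by their companion tails. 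Combined with the first paragraph, the surviving fixed points are exactly those $p \in M_I$ with $v \in \mathrm{int}\,\angle v_I$.

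Putting the two steps together and regrouping fixed points by their ambient maximal simplex yields
\[
\Td(M) \;=\; \sum_{\substack{I \in K_M \\ v \in \angle v_I}} \sum_{p \in M_I} \epsilon(p) \;=\; \sum_{\substack{I \in K_M \\ v \in \angle v_I}} w(I),
\]
using that by genericity the condition $v \in \angle v_I$ is equivalent to $v \in \mathrm{int}\,\angle v_I$. The main obstacle is the second step: one must carefully derive the clean Kosniowski-type indicator formula from the rational-function localization in the weakly complex (as opposed to holomorphic or almost complex) setting, and track the signs $\epsilon(p)$ so that the final count depends only on the intrinsic orientation data of $M$ and the $M_i$, not on auxiliary stabilisation of $\tau M$. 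Everything else is linear-algebraic bookkeeping on the multi-fan.
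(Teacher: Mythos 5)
Your strategy---localize the equivariant Todd genus (equivariant Riemann--Roch character) at the $T$-fixed points, restrict to the one-parameter subgroup $\lambda_v$, and isolate the fixed points all of whose weights pair positively with $v$---is exactly the route of the cited source \cite{masu99}; the survey itself states the theorem without proof. But your first step contains a genuine logical slip: the hypothesis that $v$ lies in no $(n-1)$-dimensional cone of $\Delta_M$ does \emph{not} imply that every tangential weight at every fixed point pairs nontrivially with $v$, nor that $\lambda_v(S^1)$ has fixed set $M^T$. A weight $\alpha_i^p$ at $p\in M_I$ vanishes on $v$ whenever $v$ lies in the linear span of $\{v_j\}_{j\in I\setminus\{i\}}$, which is much bigger than the cone $\angle v_{I\setminus\{i\}}$: for instance $v=-v_1$ typically satisfies the hypothesis, yet $\lambda_{-v_1}(S^1)=\lambda_{v_1}(S^1)$ fixes $M_1$ pointwise, so the isolated-fixed-point Lefschetz expression you write down is not available for that $v$. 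The repair is standard but must be said: prove the formula for $v$ avoiding all hyperplanes spanned by $n-1$ of the $v_i$'s attached to a common maximal simplex (a dense condition in each chamber), and then note that the right-hand side $\sum_{I:\,v\in\angle v_I}w(I)$ is locally constant on the complement of the union of the $(n-1)$-dimensional cones, because one can only enter or leave $\angle v_I$ through a facet $\angle v_{I\setminus\{i\}}$, which is itself an $(n-1)$-cone of $\Delta_M$; the stated hypothesis then suffices by perturbing $v$ inside its chamber.

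The second weak point is that you quote ``rigidity'' of the Todd genus in the weakly complex setting as an external input and separately defer the Kosniowski-type limit analysis as ``the main obstacle''; in fact these are one and the same elementary computation, and this is how the argument is actually closed in \cite{masu99}. The equivariant index is an honest virtual character, so its restriction $\chi(z)$ to $\lambda_v$ is a finite Laurent polynomial; the fixed-point formula shows each local factor $\epsilon(p)\prod_j\bigl(1-z^{-\langle\alpha_j^p,v\rangle}\bigr)^{-1}$ has a finite limit both as $z\to\infty$ and as $z\to 0$ (namely $\epsilon(p)$ or $0$ according to the signs of the pairings), so $\chi$ has finite limits at both ends, hence is constant, hence equals $\chi(1)=\mathrm{Td}(M)$ as well as the limit $\sum\epsilon(p)$ over the points with all pairings positive, i.e.\ $v\in\operatorname{int}\angle v_I$. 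No black-box rigidity theorem is needed, and indeed none is readily citable for merely stably complex actions. What genuinely remains---and you correctly flag it---is the bookkeeping identifying the weights and signs produced by the stably complex structure with the omniorientation data defining $\epsilon(p)$ and $w(I)$; that compatibility is where the hypothesis of a $T$-invariant weakly complex structure is used, and without it your $\epsilon(p)$ in the localization formula and the $\epsilon(p)$ in the definition of $w(I)$ need not agree.
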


The right hand side in \eqref{Todd} is the sum of the integers $w(I)$ over $I$ whose associated $n$-dimensional cone $\angle v_I$ contains $v$.  This sum seems to depend on the choice of $v$ but is actually independent of the choice of $v$ because so is the left hand side of \eqref{Todd}.  When $M$ is a toric manifold, the Todd genus of $M$ is $1$ and $w(I)=1$ (more precisely $w^+(I)=1,w^-(I)=0$).  Therefore, \eqref{Todd} implies that when $M$ is a toric manifold, the cones in the fan of $M$ have no overlap and the union of the cones is the entire space $H_2(BT;\R)$.  As for a torus manifold $M$ with Figure~\ref{degree2} as the multi-fan, $w(I)=1$ and the Todd genus of $M$ is 2 for a suitable choice of an omniorientation on $M$.  

The correspondence from torus manifolds to multi-fans is not one to one unlike the toric case.  It happens that different torus manifolds can associate the same multi-fan and the characterization of the multi-fans obtained from torus manifolds is not known.  However, as is seen in Theorem~\ref{theo:Todd}, some topological invariants of a torus manifold $M$ (such as signature, more generally $T_y$ or $\chi_y$ genus and elliptic genus)  can be described in terms of the multi-fan $\Delta_M$ (\cite{hatt06}, \cite{hatt08}, \cite{ha-ma03}, \cite{ha-ma05}).  As an application, one can prove the following. 

\begin{theo}[\cite{ha-ma05}]
If the first Chern class of a toric manifold $X$ of complex dimension $n$ is divisible by $N$, then 
$N\le n+1$.  Moreover, when $N=n+1$, $X$ is isomorphic to $\C P^n$, and when $N=n$, $X$ is isomorphic to a projective bundle of a Whitney sum of some complex line bundles over $\C P^1$.\footnote{When a toric manifold  $X$ is projective, this theorem can be obtained from a standard argument in algebraic geometry (\cite{fuji05}).  The paper \cite{fuji05} also treats the case where $X$ has singularity.}
\end{theo}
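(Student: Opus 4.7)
The plan is to translate the divisibility hypothesis into a congruence on the rays of the fan $\Delta_X$ via the Danilov--Jurkiewicz description of Theorem~\ref{tcring}, extract combinatorial constraints from the smooth wall relations, and close with a rigidity argument supplied by the multi-fan genus formulas of Section~4.

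First, from $c_1(X) = \sum_{i=1}^m \mu_i$ and the linear relations $\sum_i \langle u, v_i\rangle\mu_i = 0$ appearing in Theorem~\ref{tcring}, the class $c_1(X)$ is divisible by $N$ in $H^2(X;\Z)$ if and only if there exists $u \in H^2(BT)$ with $\langle u, v_i\rangle \equiv 1 \pmod N$ for every ray $v_i$ of $\Delta_X$. Pick any maximal cone $\sigma_0 = \angle\{v_1,\dots,v_n\}$; smoothness of $X$ makes $v_1,\dots,v_n$ a basis of $H_2(BT)$, so $u$ may be normalised to $u_1^\vee+\dots+u_n^\vee$, after which every other ray $v=\sum_j c_j v_j$ satisfies $\sum_j c_j\equiv 1\pmod N$. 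The smoothness wall relation $v_i+v_{i'}=\sum_{j\ne i}a^{(i)}_j v_j$ at the wall opposite $v_i$ in $\sigma_0$ therefore gives $\sum_{j\ne i}a^{(i)}_j\equiv 2\pmod N$, equivalently $N\mid c_1(X)\cdot[X_{\tau_i}]$.

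For the bound $N\le n+1$, pick any ray $v_i$ and use completeness of $\Delta_X$ to place $-v_i$ in some maximal cone $\sigma'$. If $-v_i$ is itself a ray then pairing with $u$ forces $N\mid 2$, so the bound is trivial. Otherwise, expanding $-v_i=\sum_j c_j w_j$ in the basis of $\sigma'$ with $c_j\ge 1$ gives $\sum_j c_j\ge n$ and $\sum_j c_j\equiv -1\pmod N$. Local arithmetic alone only yields $N-1\le\sum_j c_j$; sharpening this to $N-1\le n$ requires the global integer invariants produced by the Todd genus formula (Theorem~\ref{theo:Todd}) applied to the equivariant class $c$ with $c_1(X)=Nc$, or equivalently by the rigidity of a finer genus (such as the $\chi_y$- or elliptic genus) under divisibility of $c_1$. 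In the extremal case $N=n+1$, equality everywhere gives $-v_i=w_1+\dots+w_n$ for every ray, and iterating from $\sigma_0$ produces exactly $n+1$ rays with $v_{n+1}=-(v_1+\dots+v_n)$, so $X\cong\C P^n$ by Example~\ref{CPnfan}. For $N=n$, the congruences split the rays into two groups lying in parallel affine hyperplanes of $H_2(BT)\otimes\R$ with respect to $u$, yielding a smooth $T$-equivariant fibration $X\to\C P^1$ with $\C P^{n-1}$-fibers; equivariance then identifies $X$ with the projectivization of a Whitney sum of equivariant line bundles on $\C P^1$.

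The substantive obstacle is the sharpening in the middle step: local congruences from a single wall give only $N\mid 4$, regardless of $n$, so the sharp inequality $N\le n+1$ forces one to use the global structure of the complete fan. The natural route is through the multi-fan framework of Section~4, where the hypothesis $c_1(X)=Nc$ produces integer-valued invariants whose combinatorial analysis delivers the inequality and simultaneously pins down the fan sharply enough for the classification in the equality cases.
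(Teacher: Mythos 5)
The key point to flag is that the hard part of the theorem is exactly the step you do not carry out. Your reduction of the divisibility hypothesis to the congruences $\langle u,v_i\rangle\equiv 1\pmod N$ via Theorem~\ref{tcring} is fine, and the wall computation is correct, but from there the congruence bookkeeping only gives, for $-v_i=\sum_j c_jw_j$ expanded in some maximal cone, $\sum_j c_j\equiv -1\pmod N$ and hence $\sum_j c_j\ge N-1$; as you yourself note, nothing bounds $\sum_j c_j$ from above (already for Hirzebruch surfaces $H_a$ the coefficients of $-v$ in a non-adjacent cone are unbounded in $a$, so no local or purely lattice-theoretic bound of the form $\sum_j c_j\le n$ exists). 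The sentence deferring this to ``the global integer invariants produced by the Todd genus formula (Theorem~\ref{theo:Todd}) applied to the equivariant class $c$ with $c_1(X)=Nc$, or equivalently by the rigidity of a finer genus'' is not an argument: Theorem~\ref{theo:Todd} expresses the Todd genus as a weighted count of maximal cones containing a generic vector, and you do not explain how the divisibility of $c_1$ enters that formula, which genus is used, or how the inequality $N\le n+1$ falls out. This is precisely where the substance of the theorem lies; the survey itself gives no proof but attributes it to \cite{ha-ma05}, where the inequality and the equality cases are obtained through genuine computations with the $T_y$/elliptic genus in the multi-fan framework (and, for projective $X$, the footnote points to the algebro-geometric route of \cite{fuji05}). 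As written, your proposal reproduces the easy translation step and then points at the same machinery without supplying the argument.

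The equality cases inherit the gap and are also only sketched. For $N=n+1$ the claim ``equality everywhere gives $-v_i=w_1+\dots+w_n$'' presupposes the missing upper bound, silently assumes $-v_i$ lies in the interior of a maximal cone (your $c_j\ge 1$ is unjustified if $-v_i$ sits on a proper face), and even granting it you must still show the fan has exactly $n+1$ rays and coincides with the fan of Example~\ref{CPnfan}. For $N=n$, the assertion that the congruences split the rays into two parallel affine hyperplanes and that this ``yields a smooth $T$-equivariant fibration $X\to\C P^1$ with $\C P^{n-1}$-fibers'' is itself the statement requiring proof (one needs a splitting-of-fans or primitive-collection argument to produce the projective bundle structure), and no justification is offered. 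So the proposal is an honest outline of a plausible strategy, but the central inequality and both classification statements remain unproved.
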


If a toric manifold $X$ is projective, then the orbit space $X/T$ can be identified with a moment map image of $X$ so that $X/T$ is a simple convex polytope.  Unless $X$ is projective, it is not known whether $X/T$ is still a simplex convex polytope but $X/T$ is a manifold with corners and all faces of $X/T$ (even $X/T$ itself) are contractible, in particular, they are acyclic.  Moreover, intersections of faces are connected unless empty.  However, the orbit space $M/T$ of a torus manifold $M$ is not necessarily a manifold with corners and even if $M/T$ is a manifold with corners, the faces of $X/T$ are not necessarily acyclic and intersections of faces are not necessarily connected.  The following holds. 

\begin{theo}[\cite{ma-pa06}] \label{mapa}
If $H^{\rm{odd}}(M)=0$, then $M/T$ is a manifold with corners.  Moreover, 
\begin{enumerate}
\item[(1)] 
every face of $M/T$ is acyclic $\Longleftrightarrow$ 
$H^{\rm{odd}}(M)=0$．
\item[(2)] any intersection of faces of $M/T$ is connected unless empty $\Longleftrightarrow$ 
$H^*(M)$ is generated by $H^2(M)$ as a ring.  
\end{enumerate}
\end{theo}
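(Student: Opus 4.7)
The plan is to prove three things: that $H^{\rm{odd}}(M)=0$ forces $M/T$ to be a manifold with corners, then the equivalences (1) and (2). The common thread is equivariant formality \eqref{free} together with the face-ring/linear-relations description of Propositions \ref{ring} and \ref{vi}; the goal is to transport as much of the toric-manifold picture as possible into the general torus-manifold setting.

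For the initial claim I would analyze the isotropy representation at each $T$-fixed point $p$, writing $\tau_pM = \bigoplus_{i:\,p\in M_i}\nu_i|_p$ with complex structures fixed by the omniorientation. Equivariant formality makes the restriction $H^*_T(M)\to H^*_T(M^T)$ injective (Chang--Skjelbred), so comparing this injection with a face-ring-style analysis as in Propositions \ref{ring} and \ref{vi} one sees that the weights $v_i$ entering at $p$ must form a $\Z$-basis of $H_2(BT)=\Hom(S^1,T)$. Hence the $T$-action is locally standard at every fixed point, and $M/T$ inherits the structure of a manifold with corners whose facets are the $M_i/T$.

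For (1), the direction ``$H^{\rm{odd}}(M)=0\Rightarrow$ every face acyclic'' proceeds by induction on $\dim_{\C}M$. Each characteristic submanifold $M_i$ is a torus manifold of one lower complex dimension for the quotient torus $T/\lambda_{v_i}(S^1)$; the equivariant Gysin sequence combined with formality gives $H^{\rm{odd}}(M_i)=0$, so the inductive hypothesis handles every proper face of $M/T$. To see that $M/T$ itself is acyclic, I would use the free $H^*(BT)$-module structure of $H^*_T(M)$ and compare the Borel fibration \eqref{fiber} with the orbit-type filtration of $M$, extracting $H^{>0}(M/T)=0$ from the collapse. Conversely, if every face of $M/T$ is acyclic, one reconstructs $M$ from the characteristic data on $M/T$ by a Davis--Januszkiewicz-style gluing, and a Mayer--Vietoris spectral sequence indexed over the face poset has its $E_1$ page concentrated in even total degree, forcing $H^{\rm{odd}}(M)=0$.

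For (2), if every $M_I$ is connected then the argument of Proposition \ref{ring} applies verbatim: $H^*_T(M)$ is the face ring $\Z[\tau_1,\dots,\tau_m]/(\prod_{i\in I}\tau_i:I\notin K_M)$, generated in degree two, and modding out by $\pi^*H^{>0}(BT)$ via Theorem \ref{tcring} shows $H^*(M)$ is generated by $H^2(M)$. Conversely, a disconnected $M_I$ forces each component to carry its own equivariant Poincar\'e dual class in degree $2|I|>2$ that is independent of the subring generated by $\{\tau_i\}$, and this failure of degree-two generation survives in $H^*(M)$ under formality. The main obstacle I expect is the forward direction of (1): showing that $M/T$ itself, not just its proper faces, is acyclic. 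Descending from equivariant to ordinary cohomology of an orbit space when the action is far from free requires Atiyah--Bredon/Chang--Skjelbred technology rather than a naive Leray--Serre argument, and that is the point where equivariant formality is used most essentially.
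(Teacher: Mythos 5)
You should first note that this survey states Theorem~\ref{mapa} without proof, citing \cite{ma-pa06}, so your sketch has to stand on its own; judged that way it has genuine gaps. The most serious is the very first step. You claim that $H^{\rm{odd}}(M)=0$, via Chang--Skjelbred injectivity plus a ``face-ring-style analysis as in Propositions~\ref{ring} and~\ref{vi}'', forces the weights at each fixed point to form a $\Z$-basis of $H_2(BT)$, and hence that $M/T$ is a manifold with corners. But the basis property at a fixed point is automatic for any effective smooth action (the isotropy representation at a fixed point is faithful, and a faithful $2n$-dimensional $T^n$-representation has weights forming a $\Z$-basis), so this part uses no cohomological hypothesis and proves nothing beyond what is free; and local standardness at fixed points is far from local standardness everywhere. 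The obstruction to $M/T$ being a manifold with corners lives at points away from $M^T$ whose isotropy is disconnected (exceptional orbits with finite isotropy components), and ruling these out is precisely where $H^{\rm{odd}}(M)=0$ must be used; your sketch never addresses such points. Moreover, invoking Propositions~\ref{ring} and~\ref{vi} here is circular: they are stated for toric manifolds, and their torus-manifold analogue (Theorem~\ref{mapa2}) is available only after local standardness and face-acyclicity have been established.

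The second gap is in the forward direction of (1): acyclicity of $M/T$ (and of its faces) cannot be ``extracted from the collapse'' of the Borel fibration \eqref{fiber}, because the Serre spectral sequence of $M\to ET\times_T M\to BT$ carries no information about the orbit space. One must genuinely relate $H^*(M/T)$ to equivariant data, for instance through the free part $M\setminus\bigcup_i M_i$, whose Borel construction is homotopy equivalent to the interior of $M/T$, combined with relative equivariant cohomology along the orbit-type filtration (or an Atiyah--Bredon-type sequence) and an induction over faces; you acknowledge needing such technology but do not supply the argument, and it is the heart of the implication. Similarly, in (2) the forward direction cannot apply Proposition~\ref{ring} ``verbatim'' (again a toric statement), and the converse needs the face-ring-of-a-simplicial-poset description of $H^*_T(M)$ to certify that the class $\tau_F$ of a single component $F$ of a disconnected $M_I$ is not a polynomial in the degree-two classes, together with a graded Nakayama-type argument to descend the failure of generation from $H^*_T(M)$ to $H^*(M)$. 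Your reconstruction idea for the converse of (1) (a Davis--Januszkiewicz model plus a Mayer--Vietoris argument over the face poset) is the right shape, but it silently uses that the principal $T$-bundle over the interior of $M/T$ is trivial, which must be deduced (e.g.\ from acyclicity of $M/T$) before the gluing model computes $H^*(M)$.
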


\begin{exam} \label{S2n}
When $M=S^{2n}\subset \C^n\times\R$ (see Example~\ref{exam12}(1)), 
$M/T$ is homeomorphic to an $n$-dimensional disk but $M/T$ is a manifold with corners as is shown in the following figure and intersections of faces of $M/T$ are not necessarily connected.  

\begin{figure}[h]
\begin{center}
\setlength{\unitlength}{0.6mm}
\begin{picture}(55,45)(20,-10)
\put(0,0){\includegraphics[scale=0.72]{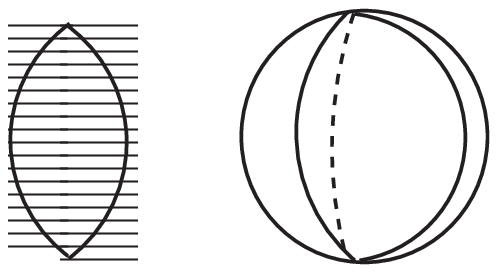}}
\put(8,-8){$S^4/T$}
\put(44,-8){$S^6/T$}
\end{picture}
\end{center}
\end{figure}
\end{exam}

As for a torus manifold $M$ with vanishing odd degree cohomology, one can describe its equivariant or ordinary cohomology neatly.  The description is similar to the toric case but unlike the toric case, $H^*(M)$ is not necessarily generated by $H^2(M)$ in general even if $H^{\rm{odd}}(M)=0$, so we need a new notion to describe $H^*(M)$, which we shall explain.  

Suppose our torus manifold $M$ satisfies $H^{\rm{odd}}(M)=0$.  Then $M/T$ is a manifold with corners and every face of $M/T$ is acyclic by Theorem~\ref{mapa}. 
Let $F$ be a face of $M/T$ and denote by $M_F$ the pullback of $F$ by the projection $q\colon M\to M/T$.  
$M_F$ is a closed $T$-invariant submanifold of $M$ and its codimension is $2\codim F$.  Therefore, 
$M_F$ represents an equivariant cycle in $M$ and its Poincar\'e dual $\tau_F$ determines an element of $H_T^{2\codim F}(M)$.  
We allow $F$ to be the entire space $M/T$ or $\emptyset$ and understand $\tau_{M/T}=1$ and $\tau_\emptyset=0$.  With this understood the following holds. 

\begin{theo}[\cite{ma-pa06}] \label{mapa2}
If $M$ is a torus manifold with $H^{\rm{odd}}(M)=0$, then 
\[
H^*_T(M)=\Z[\tau_F\mid \text{$F$ is a face of $M/T$}]/\big(\tau_F\tau_G-
\tau_{F\vee G}\sum_{\text{$E\subset F\cap G$}}\tau_E\big),
\]
where $F\vee G$ denotes the minimal face of $M/T$ which contains both $F$ and $G$, and $E$ runs over all faces in $F\cap G$.
\end{theo}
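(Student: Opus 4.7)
The plan is to exploit equivariant formality together with the localization theorem to detect everything at the finite fixed-point set. Since $H^{\mathrm{odd}}(M)=0$, the Leray--Serre spectral sequence of $M\hookrightarrow ET\times_T M\to BT$ collapses as in \eqref{free}, so $H^*_T(M)$ is a free $H^*(BT)$-module. Because $M^T$ is finite, the restriction $\iota^*\colon H^*_T(M)\to H^*_T(M^T)=\bigoplus_{p\in M^T}H^*(BT)$ is injective, reducing the verification of any multiplicative identity to a pointwise check at each fixed point.

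I would first verify the quadratic relations $\tau_F\tau_G=\tau_{F\vee G}\sum_{E\subset F\cap G}\tau_E$. The restriction $\tau_F|_p$ vanishes unless $p\in M_F$, in which case it equals the equivariant Euler class of the normal bundle of $M_F$ in $M$ at $p$, a product of $T$-weights. If $p\notin M_{F\vee G}$, then $p$ lies in neither $M_F$ nor $M_G$ (both of which are contained in $M_{F\vee G}$) and both sides vanish. If $p\in M_{F\vee G}$, there is exactly one connected component $E$ of $F\cap G$ with $p\in M_E$; the relation then reduces to a matching of Euler class weight products, using that the normal weights of $M_F$ and $M_G$ at $p$ jointly span those of $M_{F\vee G}$ with $\codim F+\codim G=\codim(F\vee G)+\codim E$ locally at $p$.

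Next, I would show that the $\tau_F$'s generate $H^*_T(M)$ as an $H^*(BT)$-algebra and that the listed relations are complete. By Theorem~\ref{mapa}, $M/T$ is a manifold with corners all of whose faces are acyclic, and each preimage $M_F$ is itself a torus manifold with vanishing odd cohomology (the induction hypothesis applying after a dimension reduction). A Mayer--Vietoris argument on the cover of $M$ by equivariant tubular neighborhoods of the $M_F$, combined with the inductive description of each $H^*_T(M_F)$, then shows that the $\tau_F$ together with pullbacks from $H^*(BT)$ span $H^*_T(M)$. Completeness of the relations is obtained by comparing Hilbert series: the stated relations let one rewrite any monomial in the $\tau_F$ as an $H^*(BT)$-linear combination of single $\tau_F$'s, so the claimed quotient is a free $H^*(BT)$-module of rank equal to the total number of faces of $M/T$, which by the acyclic cellular structure of $M$ agrees degreewise with the rank of $H^*_T(M)$.

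The main obstacle will be this generation step. Verifying the quadratic relations at fixed points is essentially a local weight computation and bookkeeping, but showing that the face poset of $M/T$ really does control all of $H^*_T(M)$—so that in particular each connected component $E$ of $F\cap G$ must contribute its own independent class $\tau_E$ rather than being lumped into a single class—depends in an essential way on the combined topological hypotheses $H^{\mathrm{odd}}(M)=0$, acyclicity of every face of $M/T$, and the manifold-with-corners structure guaranteed by Theorem~\ref{mapa}.
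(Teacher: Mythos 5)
The survey does not actually prove this theorem: it is quoted from \cite{ma-pa06} with only the construction of the classes $\tau_F$ given, so your proposal has to be judged on its own. Its first half is sound in outline: $H^{\rm{odd}}(M)=0$ gives equivariant formality, hence $H^*_T(M)$ is a free $H^*(BT)$-module and restricts injectively to the finite fixed point set, and the quadratic relations can indeed be checked fixed point by fixed point as a comparison of products of normal weights (modulo a small slip: when $p\in M_{F\vee G}$ you must also allow $p\notin M_F\cap M_G$, in which case no component $E$ of $F\cap G$ contains $p$ and both sides restrict to zero).

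The genuine gap is in the completeness step, and it is not just the part you flag. First, the relations $\tau_F\tau_G=\tau_{F\vee G}\sum_E\tau_E$ do not rewrite an arbitrary monomial as an $H^*(BT)$-linear combination of single $\tau_F$'s; straightening only reduces monomials to products $\tau_{F_1}\cdots\tau_{F_k}$ along chains $F_1\supseteq\cdots\supseteq F_k$, and by Stanley \cite{stan91} such chain monomials form a $\Z$-basis of the face ring of a simplicial poset, so the claimed quotient is certainly not free on the single $\tau_F$'s. Second, the rank count is wrong: $H^*_T(M)\cong H^*(BT)\otimes H^*(M)$ is free over $H^*(BT)$ of rank $\sum_i b_{2i}(M)=\chi(M)=\#M^T$, i.e.\ the number of \emph{vertices} of $M/T$, not the number of faces. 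Already for $M=S^4$ with bigon orbit space the rank is $2$ while there are four proper faces, so the proposed Hilbert-series comparison cannot close the argument. What is actually needed is (i) a genuine proof that the $\tau_F$ generate $H^*_T(M)$ as a ring, which in \cite{ma-pa06} uses local standardness, the acyclicity of all faces of $M/T$ from Theorem~\ref{mapa}, and an induction over the face structure of the orbit space (the step you yourself identify as the obstacle but do not carry out), and (ii) the correct dimension comparison, namely the extension of Lemma~\ref{hi} asserting $h_i=b_{2i}(M)$, which matches the Hilbert series $\sum_i h_it^{2i}/(1-t^2)^n$ of the face ring with that of $H^*(BT)\otimes H^*(M)$; combined with injectivity on fixed points this yields the isomorphism. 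As written, both the surjectivity and the dimension count are unestablished, so the proposal does not yet prove the theorem.
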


\begin{exam}
(1) When $n=2$ in Example~\ref{S2n}, the orbit space $S^4/T$ is a 2-gon.  If we label the two edges of $S^4/T$ by $F,G$ and the two vertices by $p,q$, then 
\[
H^*_T(S^4)=\Z[\tau_F,\tau_G,\tau_p,\tau_q]/\big(\tau_F\tau_G-(\tau_p+\tau_q), \tau_p\tau_q\big),
\]
where $\deg\tau_F=\deg\tau_G=2$ and $\deg\tau_p=\deg\tau_q=4$. 

(2) In the case (2) in Theorem~\ref{mapa}, let $F_1,\dots,F_m$ be the codimension one faces of $M/T$. Then any codimension $k$ face $F$ of $M/T$ is the intersection of some $k$ number of  codimension one faces $F_{i_1},\dots,F_{i_k}$, so $\tau_F=\prod_{j=1}^k\tau_{F_{i_j}}$.  Therefore, 
$H^*_T(M)$ is generated by $\tau_{F_i}$ $(i=1,\dots,m)$ and the relations among them agrees with those in the toric case. These $\tau_{F_i}$'s correspond to $\tau_i$'s in the previous section.  
\end{exam}

The set of faces of $M/T$ forms a simplicial poset under the partial order on the faces defined by:  $F\ge G$ if and only if $F\subset G$.  
A simplicial complex can be thought of as a simplicial poset under inclusion relations among simplices, and Stanley (\cite{stan91}) defined a face ring for a simplicial poset from a purely combinatorial viewpoint in such a way that when the simplicial poset is a simplicial complex, the face ring agrees with the well-known Stanley-Reisner ring of the simplicial complex.  
Theorem~\ref{mapa2} shows that the face ring of a simplicial poset has a geometrical meaning, i.e. it is the equivariant cohomology of a torus manifold.  
When $H^{\rm{odd}}(M)=0$, the ordinary cohomology ring $H^*(M)$ of $M$ is the quotient of $H^*_T(M)$ by the ideal generated by elements in Proposition~\ref{vi} as in the toric case.

\section{Characterization of face numbers of simplicial polytopes and simplicial cell spheres}

As is seen in the fundamental theorem in toric geometry, toric geometry fits well to combinatorics and interesting applications to combinatorics are known.  Among those applications, the characterization of face numbers of simplicial polytopes must have been striking as mentioned in the Introduction.  In fact, there is a similar application in toric topology, that is, the characterization of face numbers of simplicial cell spheres.  In this section, we explain the characterization and outline the proof.  One can see a difference between algebraic geometry and topology from this story.      

The convex hull of a finitely many points in $\R^n$ is called a \emph{convex polytope}.  We may assume that those points are not contained in any hyperplane and will assume it throughout this section.  Therefore our convex polytope is of dimension $n$.  A face of dimension $0$ (resp. 1) is called a vertex (resp. an edge) and a face of codimension $1$ is called a \emph{facet}.  The number of $i$-dimensional faces of a convex polytope $P$ is denoted by $f_i(P)$ (or simply $f_i$) $(0\le i\le n-1)$ and $(f_0,f_1,\dots,f_{n-1})$ is called the \emph{$f$-vector} of $P$.  

Each component $f_i$ of the $f$-vector is a positive integer but they do not take arbitrary positive integers.  For instance, since the boundary of $P$ is homeomorphic to a sphere of dimension $n-1$ and its Euler characteristic is $1+(-1)^{n-1}$, the $f$-vector must satisfy  
\begin{equation} \label{eqn:euler}
f_0-f_1+\dots+(-1)^{n-1}f_{n-1}=1+(-1)^{n-1}. 
\end{equation}
Moreover, since $P$ is of dimension $n$, the number $f_0$ of the vertices of $P$ must satisfy 
\begin{equation} \label{eqn:f_0}
f_0\ge n+1. 
\end{equation}
Like this, the $f$-vector of a convex polytope must satisfy some equalities or inequalities.  It is not difficult to characterize the $f$-vectors of convex polytopes of dimension $3$ but the characterization remains open in dimension greater than or equal to $4$ (see \cite{zieg95}).   

The integer vector $(h_0,h_1,\dots,h_n)$ defined by the equation 
\begin{equation} \label{eqn:h_vector}
\begin{split}
h_0t^n+&h_1t^{n-1}+\dots+h_n\\
:=&(t-1)^n+f_0(t-1)^{n-1}+\dots+f_{n-1}
\end{split}
\end{equation}
is called an \emph{$h$-vector}.  Clearly $h_0=1$ and $h_1=f_0-n$.  In general, the expression of $h_i$ in terms of $f_j$'s is rather complicated.  The $h$-vector contains the same information as the $f$-vector but it is often easier to treat the $h$-vector.  For instance,  (\ref{eqn:euler}) and  
(\ref{eqn:f_0}) above respectively reduce to simple forms 
\begin{equation} \label{eqn:h_vector2}
(1=)h_0=h_n, \quad (1=)h_0\le h_1. 
\end{equation}

When the finitely many points spanning the convex polytope $P$ are in a general position, every face of $P$ is a simplex; so such $P$ is called a \emph{simplicial convex polytope}. 

A convex polytope $P$ may be defined as the intersection of finitely many half spaces in $\R^n$ which is bounded.  When hyperplanes obtained as boundaries of the half spaces are in a general position, there are exactly $n$ hyperplanes meeting at each vertex of $P$.  Such $P$ is called a \emph{simple convex polytope}.   

Simplicial convex polytopes and simple convex polytopes are respectively general ones in the above two definitions of polytopes and they are dual to each other.  For example, there are five regular convex polytopes in dimension 3, where the regular tetrahedron, the regular octahedron and the regular icosahedron are simplicial while the regular tetrahedron, the regular cube and the regular dodecahedron are simple.  As is well known, the regular tetrahedron is self-dual and the regular cube (resp. dodecahedron) is dual to the regular octahedron (resp. icosahedron).  In general, if we denote the polytope dual to a convex polytopes $P$ of dimension $n$ by $P^\circ$, then $f_i(P)=f_{n-i-1}(P^\circ)$; so the characterization of face numbers of simplicial convex polytopes is equivalent to that of simple convex polytopes.   

The orbit space $X/T$ of a toric manifold $X$ is a simple convex polytope (in many cases) and the boundary complex of the dual polytope of $X/T$ is isomorphic to the underlying simplicial complex $K_X$ of the fan of $X$.  The following holds for the topology of $X$ and the combinatorics of $X/T$.  

\begin{lemm} \label{hi}
If the orbit space $X/T$ of a toric manifold $X$ is a simple convex polytope, then the $i$-th $h$-vector of $X/T$ agrees with the $2i$-th Betti number $b_{2i}(X)$ of $X$.   
\end{lemm}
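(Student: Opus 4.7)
The plan is to compute the Poincaré polynomial of $X$ directly from the Danilov--Jurkiewicz presentation and match coefficients with the $h$-numbers of $X/T$. Since $X/T$ is a simple $n$-polytope, its dual polytope is simplicial with boundary complex isomorphic to $K_X$; the $h$-vector of a simple polytope is defined to coincide with the $h$-vector of this dual simplicial sphere, so $h_i(X/T) = h_i(K_X)$, and the problem reduces to showing $b_{2i}(X) = h_i(K_X)$.

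First I would record the Hilbert series of the Stanley--Reisner ring
\[
R := \Z[\mu_1,\dots,\mu_m]/(\textstyle\prod_{i\in I}\mu_i \mid I\notin K_X),
\]
with $\deg\mu_i = 2$. A standard computation, decomposing monomials according to their support (a face of $K_X$) and using $\dim K_X = n-1$, gives
\[
F(R,t) \;=\; \frac{\sum_{i=0}^{n} h_i(K_X)\,t^{2i}}{(1-t^2)^n}.
\]

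Second, by Theorem~\ref{tcring}, $H^*(X) = R/\mathcal{J}$, where $\mathcal{J}$ is generated by the $n$ linear forms $\theta_j = \sum_{i=1}^m \langle u_j, v_i\rangle\,\mu_i$ for $u_1,\dots,u_n$ a basis of $H^2(BT)$. The crux is to show that $(\theta_1,\dots,\theta_n)$ is a regular sequence in $R$. This is where the topology does the work: by \eqref{free}, the Leray--Serre spectral sequence of \eqref{fiber} collapses and $H^*_T(X)$ is free as a module over $H^*(BT) = \Z[u_1,\dots,u_n]$. Under the identification $H^*_T(X) \cong R$ from Proposition~\ref{ring}, the action of $H^*(BT)$ is precisely by multiplication by the $\theta_j$ (Proposition~\ref{vi}). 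Freeness of $R$ over the polynomial subring $\Z[\theta_1,\dots,\theta_n]$ forces $(\theta_1,\dots,\theta_n)$ to be a regular sequence.

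Third, a regular sequence of elements of degree $2$ multiplies the Hilbert series by $(1-t^2)^n$ on passing to the quotient, so
\[
F(H^*(X),t) \;=\; (1-t^2)^n \cdot F(R,t) \;=\; \sum_{i=0}^{n} h_i(K_X)\,t^{2i}.
\]
Reading off coefficients and using the vanishing of $H^{\rm odd}(X)$ yields $b_{2i}(X) = h_i(K_X) = h_i(X/T)$.

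The single genuine obstacle is the regular sequence assertion in step two; without it the computation collapses. However, that input has essentially been supplied already by \eqref{free}, so the proof becomes a matter of bookkeeping with Hilbert series and the duality between $K_X$ and the simple polytope $X/T$.
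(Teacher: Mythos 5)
Your argument is correct, and it is the natural one given how the paper is set up: the paper itself states Lemma~\ref{hi} without proof, but every ingredient you invoke (Proposition~\ref{ring}, Proposition~\ref{vi}, the freeness statement \eqref{free}, and Theorem~\ref{tcring}) is exactly the machinery the paper develops, so your proof is the intended/standard algebraic one rather than a genuinely different route. One small remark: the regular-sequence step, while correct (freeness of a nonzero module forces $\pi^*$ to be injective and multiplication by $\theta_1,\dots,\theta_n$ to be successively injective on the quotients), can be bypassed entirely. From \eqref{free} you get directly that the Poincar\'e series of $H^*_T(X)\cong R$ factors as the product of those of $H^*(BT)$ and $H^*(X)$, i.e.
\[
\frac{\sum_{i=0}^{n}h_i(K_X)\,t^{2i}}{(1-t^2)^n}\;=\;\frac{1}{(1-t^2)^n}\sum_{i}b_{2i}(X)\,t^{2i},
\]
which already gives $b_{2i}(X)=h_i(K_X)=h_i(X/T)$ once you use $H^{\mathrm{odd}}(X)=0$ and the torsion-freeness implicit in \eqref{free}; the passage through $H^*(X)=R/\mathcal{J}$ and the regular sequence is then bookkeeping you do not strictly need. (A second classical proof, not used by you, counts cones by index via a generic linear functional, i.e.\ a Morse-theoretic or Bialynicki-Birula decomposition; it yields the same identity without any commutative algebra.)
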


Noting this fact, the characterization of the $h$-vectors of simplicial convex polytopes, which was conjectured by McMullen(\cite{mcmu71}), has been completed as mentioned in the Introduction.  

\bigskip
\noindent
{\bf $g$-Theorem.} (Billera-Lee \cite{bi-le80}, Stanley 
\cite{stan80})．\
An integer vector $(h_0,h_1,\dots,h_n)$ with $h_0=1$ is the $h$-vector of some simplicial convex polytope of dimension $n$ if and only if it satisfies the following three conditions: 
\begin{enumerate}
\item[(1)] $h_i=h_{n-i}$ $(\forall i)$ (Dehn-Sommerville equations).
\item[(2)] $(1=)h_0\le h_1\le \dots\le h_{[n/2]}$.
\item[(3)] $h_{i+1}-h_i\le (h_i-h_{i-1})^{\langle i\rangle}$ \ \ 
$(1\le i\le [n/2]-1)$.
\end{enumerate}

\bigskip
\noindent
The (1) and (2) above can be thought of as a generalization of (\ref{eqn:h_vector2}).  The meaning of the notation in (3) above is as follows.   
For positive integers $a$ and $i$, we express  
\[
a=\binom{a_i}{i}+\binom{a_{i-1}}{i-1}+\dots+\binom{a_j}{j}
\]
where $a_i>a_{i-1}>\dots>a_j\ge j\ge 1$, and define 
\medskip
\[
a^{\langle i\rangle}:=\binom{a_{i}+1}{i+1}+
\binom{a_{i-1}+1}{i}+\dots+\binom{a_j+1}{j+1}.  
\]
For example, if $a=28$ and $i=4$, then 
\[
28=\binom{6}{4}+\binom{5}{3}+\binom{3}{2},
\]
so we have 
\[
28^{\langle 4\rangle}= \binom{7}{5}+\binom{6}{4}+\binom{4}{3}=40
\]

\begin{proof}[\underbar{Outline of the proof of $g$-Theorem}]
Billera-Lee\cite{bi-le80} proved the sufficiency and Stanley\cite{stan80} proved the necessity using toric geometry.\footnote{McMullen\cite{mcmu93} proves the necessity of (1), (2), (3) in a purely combinatorial way without using toric geometry.} The argument of Stanley is as follows.  Let  $P$ be a simplicial convex polytope of dimension $n$.  Then there exists a projective toric manifold (orbifold in general) $X$ such that 
$b_{2i}(X)=h_i(P)$ (see Lemma~\ref{hi}).  Therefore, (1) follows from the Poincar\'e duality theorem, (2) follows from the hard Lefschetz theorem and (3) follows by applying Macaulay's theorem to a certain quotient ring of $H^*(X)$.  
\end{proof}

The boundary of a simplicial convex polytope of dimension $n$ gives a simplicial decomposition of a sphere of dimension $n-1$.  A sphere with a simplicial decomposition is called a \emph{simplicial sphere}.  If we define $f_i$ to be the number of $i$-simplices in the simplicial decomposition,  an $f$-vector and an $h$-vector can similarly be defined for a simplicial sphere (more generally for a simplicial complex).  Therefore it is natural to ask whether the necessity in the $g$-Theorem still holds for simplicial spheres.  It is known that condition (1) is necessary even for simplicial spheres but it is unknown whether conditions (2) and (3) are necessary for simplicial spheres.  This is a longstanding problem after $g$-Theorem is established.   

In order to make the above problem essential, we need to know the existence of a simplicial sphere which is not isomorphic to the boundary complex of a simplicial convex polytope.  In fact, it is known that there are a great many such simplicial spheres (see \cite{zieg02})．
One can also see that such an example exists from a topological point of view as follows.  We take a simplicial decomposition of a homology 3-sphere $X$ with a non-trivial fundamental group (so $X$ is not homeomorphic to the standard $3$-sphere).  The double suspension $\Sigma^2X$ of $X$ is homeomorphic to the standard $5$-sphere by Double Suspension Theorem and has a simplicial decomposition induced from that of $X$.  But $\Sigma^2X$ with the induced simplicial decomposition is not isomorphic to the boundary complex of any simplicial convex polytope because the link of a $1$-simplex obtained from the double suspension is $X$ but $X$ is not homeomorphic to the standard $3$-sphere.  

There is an object called a \emph{simplicial cell complex}.  The notion of a simplicial cell complex sits in between a simplicial complex and a cell complex.  Two simplices in a simplicial complex intersect at one simplex unless the intersection is empty. A simplicial cell complex is a cell complex whose cells are all simplices and two simplices may intersect at more than one simplex.  A simplicial cell complex determines a simplicial poset under the inclusion relation among the simplices and conversely a simplicial poset determines a simplicial cell complex.  Therefore, the notions of simplicial cell complex and simplicial poset are equivalent. 

A simplicial cell complex is called a \emph{simplicial cell sphere} if it is homeomorphic to a standard sphere.  Needless to say, a simplicial sphere is a simplicial cell sphere.  

\begin{exam}
Gluing two copies of simplices of dimension $n-1$ along their boundary by the identity map produces a simplicial cell sphere which is not a simplicial sphere.  This simplicial cell sphere is \lq\lq dual" to the boundary of the orbit space $S^{2n}/T$ in Example~\ref{S2n}.  
\end{exam}

As for a simplicial cell complex, an $f$-vector and an $h$-vector can be defined similarly and Lemma~\ref{hi} can be extended to torus manifolds with vanishing odd degree cohomology.  

The characterization of face numbers of simplicial spheres is unknown but one can characterize the face numbers of simplicial cell spheres.  The following theorem was established in \cite{stan91} except the necessity of condition (3) and the necessity was  proved in \cite{masu05}.  The proof of the necessity of condition (3) is purely algebraic but the idea of the proof stems from topology.  

\begin{theo}[\cite{masu05}, \cite{stan91}] \label{mast}
An integer vector $(h_0,h_1,\dots,h_n)$ with $h_0=1$ is the $h$-vector of some simplicial cell sphere of dimension $n-1$ if and only if it satisfies the following three conditions: 
\begin{enumerate}
\item[(1)] $h_i=h_{n-i}$ $(\forall i)$ (Dehn-Sommerville equations).
\item[(2)] $h_i\ge 0$ $(1\le i\le n-1)$.
\item[(3)] When $n$ is even, $h_{n/2}$ is even if the equality holds in (2) above for some $j$ (i.e. if $h_j=0$ for some $j$).  
\end{enumerate}
\end{theo}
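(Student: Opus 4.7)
The plan is to reduce all three conditions to algebraic statements about the face ring $A_P$ of the simplicial poset $P$ dual to the simplicial cell sphere (the ring described in Theorem~\ref{mapa2}). By Stanley~\cite{stan91}, this ring is Cohen--Macaulay and, since $P$ triangulates a sphere, Gorenstein$^{*}$. Choosing a field $k$ and a homogeneous system of parameters $\theta_1,\dots,\theta_n\in (A_P)_2$, the quotient $B:=(A_P\otimes k)/(\theta)$ is a finite-dimensional graded $k$-algebra with $\dim_k B_{2i}=h_i$, which already yields the non-negativity condition~(2). The Gorenstein$^{*}$ property makes $B$ a Poincar\'e duality algebra of top degree $2n$, and the perfect pairing $B_{2i}\otimes B_{2(n-i)}\to B_{2n}\cong k$ supplies the Dehn--Sommerville equalities in~(1).

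For sufficiency one must produce, for each vector satisfying (1)--(3), a simplicial cell sphere realising it. When the $h$-vector has no zero entry the $g$-Theorem already provides a simplicial convex polytope whose boundary does the job. When some $h_j=0$ the realization must be via a genuine simplicial cell sphere that is not a simplicial sphere: the basic example is the $(n-1)$-sphere obtained by identifying two copies of the boundary of an $(n-1)$-simplex (as mentioned in the paper), which has $h$-vector $(1,0,\dots,0,2,0,\dots,0,1)$, and more general admissible vectors are built from such models by joins, subdivisions and bistellar-type moves adapted to simplicial cell complexes, always using the hypothesis that $h_{n/2}$ is even in the required gluings.

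The principal obstacle is the necessity of (3). The plan is to work with $k=\Q$ and analyse the symmetric bilinear pairing $B_n\otimes B_n\to B_{2n}\cong\Q$ in the middle degree, available precisely because $n$ is even; its rank equals $h_{n/2}$, and one must show it is even. The hypothesis is used as follows: from $h_j=0$ and~(1) one also has $h_{n-j}=0$, so $B_{2j}=B_{2(n-j)}=0$, and therefore $B_a\cdot B_b\subseteq B_{a+b}=0$ whenever $a+b\in\{2j,2(n-j)\}$. These forced vanishings mirror the topological situation where the middle intersection form of a torus manifold splits off hyperbolic planes, and the strategy is to use them to exhibit a Lagrangian in $B_n$, i.e.\ an isotropic subspace of dimension $h_{n/2}/2$ built from products of elements in degrees straddling the middle. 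A nondegenerate symmetric form over $\Q$ admitting a Lagrangian is hyperbolic and hence has even rank, yielding~(3). The delicate step, and the hard part of the whole argument, is the explicit production of that Lagrangian: one must use the defining relations of $A_P$ from Theorem~\ref{mapa2} to convert the numerical vanishing $h_j=0$ into enough algebraic relations on $B_n$ to control half of its dimension, and this is where the topological intuition of splitting off hyperbolic summands from an intersection form is translated into pure commutative algebra as in~\cite{masu05}.
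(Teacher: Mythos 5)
Your reduction of (1) and (2) to the Gorenstein$^*$ face ring of the simplicial poset modulo a linear system of parameters is sound and is essentially Stanley's argument, but the other two parts of the proposal have genuine gaps. First, sufficiency: the claim that the $g$-Theorem handles every admissible vector with no zero entry is false, because conditions (1)--(3) here are far weaker than the $g$-conditions. For example $(1,2,1,2,1)$ (with $n=4$) is symmetric, strictly positive, and satisfies (3) vacuously, yet violates $h_1\le h_2$, so it is not the $h$-vector of any simplicial convex polytope; such vectors must be realized by genuinely non-polytopal simplicial cell spheres even in the ``no zeros'' case. The subsequent appeal to ``joins, subdivisions and bistellar-type moves'' is not a construction: you never say how these operations act on $h$-vectors of simplicial cell spheres or why they reach an arbitrary vector satisfying (1)--(3). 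The paper's realization is concrete and different: take equivariant connected sums of the torus manifolds $S^{2k}\times S^{2n-2k}$ $(k=1,\dots,n-1)$ and $\C P^n$, all of which have vanishing odd cohomology; the boundaries of their orbit spaces are dual to simplicial cell spheres, and the extension of Lemma~\ref{hi} identifies their $h$-vectors with the even Betti numbers, which sweep out exactly the vectors allowed by (1)--(3). (Stanley's purely combinatorial construction in \cite{stan91} is an alternative.)

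Second, and more seriously, your treatment of the necessity of (3) defers precisely the step that constitutes the theorem's real content: no Lagrangian is produced, and nothing in the proposal shows that the vanishing $B_{2j}=B_{2(n-j)}=0$ forces a half-dimensional isotropic subspace of the middle pairing---as written this is a restatement of what must be proved. Moreover, the mechanism behind the known proof is of a different nature: it is a parity-of-Euler-characteristic argument, not an intersection-form splitting. Topologically (as the paper sketches), $h_j=0$ gives $H^{2j}(M;\Z/2)=0$; since the total Stiefel--Whitney class has the form $w(M)=\prod_{i=1}^m(1+\mu_i)$ with $\deg\mu_i=2$, every monomial of degree $2k$ with $k\ge j$ contains a degree-$2j$ partial product lying in $H^{2j}(M;\Z/2)=0$, so $w_{2k}(M)=0$ for $k\ge j$; in particular $w_{2n}(M)=0$, hence $\chi(M)$ is even, and since $\chi(M)=\sum_i h_i\equiv h_{n/2}\pmod 2$ by the Dehn--Sommerville relations, $h_{n/2}$ is even. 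The actual proof in \cite{masu05} makes this algebraic by imitating the class $\prod_i(1+\mu_i)$ inside the mod~$2$ Artinian reduction of the Gorenstein$^*$ face ring (thereby also covering arbitrary simplicial cell spheres, which need not be dual to orbit spaces of torus manifolds---a generality your ring-theoretic setup does have, to your credit). So the evenness of $h_{n/2}$ emerges from a global parity of $\sum_i h_i$ rather than from hyperbolicity of the middle form, and I see no route from $B_{2j}=0$ alone to the Lagrangian you posit; without that, condition (3) remains unproved in your argument.
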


\begin{proof}
Here is a brief explanation (from a topological viewpoint) why the theorem holds.  
Since $S^{2k}\times S^{2n-2k}$ $(k=1,\dots,n-1)$ and $\C P^n$ are torus manifolds with vanishing odd degree cohomology, so are their equivariant connected sums.  Their orbit spaces are manifolds with corners whose boundaries are dual to simplicial cell spheres of dimension $n-1$.  Noting that Lemma~\ref{hi} holds for torus manifolds with vanishing odd degree cohomology, we see that any $h$-vector which satisfies conditions (1), (2) and (3) can be realized as $h$-vectors of simplicial cell spheres of dimension  $n-1$.       
  
The idea of the proof of the necessity is as follows.  Suppose that a simplicial cell sphere $K$ of dimension $n-1$ is dual to the boundary of the orbit space $M/T$ of some torus manifold $M$ with vanishing odd degree cohomology.  Then $h_i(K)=b_{2i}(M)$. Therefore, condition (1) follows from Poincar\'e duality and condition (2) follows from Lemma~\ref{hi} extended to torus manifolds.  The problem is to prove condition (3).   

If $h_j(K)=0$ for some $j$, then $b_{2j}(M)=0$, in particular, $H^{2j}(M;\Z/2)=0$.  On the other hand, one can see that the 
Stiefel-Whitney class of $M$ is of the form 
$$w(M)=\prod_{i=1}^m(1+\mu_i)\quad (\mu_i\in H^2(M;\Z/2)),$$
where $\mu_i$ is the Poincar\'e dual to the characteristic submanifold $M_i$.  
These imply that $w_{2k}(M)=0$ $(k\ge j)$ because $H^{2j}(M;\Z/2)=0$ and $w(M)$ is a polynomial in the degree two elements $\mu_i$'s.  In particular, the top Stiefel-Whitney class $w_{2n}(M)$ vanishes and this means that the Euler characteristic $\chi(M)$ of $M$ is even.  This together with (1) and the identity  
$$\chi(M)=\sum_{i=1}^nb_{2i}(M)=\sum_{i=1}^nh_i(K)$$
implies condition (3).  
\end{proof}

If (the realization of) a simplicial cell complex $K$ is homeomorphic to a manifold $N$, then $K$ is called a simplicial cell decomposition of $N$.  
One can ask the characterization of $h$-vectors of simplicial cell decompositions for not only a sphere but also other manifolds (such as real projective spaces and disks).  When $N$ is a closed manifold, those $h$-vectors might be characterized by three types of conditions like Theorem~\ref{mast}, but there is no complete answer to this question except a sphere (see \cite{masu04} for some trial)\footnote{Added in the English translation:  Recently S. Murai (\cite{mura10}, \cite{mura11}) has characterized the $h$-vectors of simplicial cell decompositions when $N$ is a real projective space, a disk or a product of two spheres.}. Dehn-Sommerville equations hold for any closed manifold with a little modification as follows.         

\begin{theo}[p.74 in \cite{stan96} or Theorem 7.44 in \cite{bu-pa02}] \label{theo:gdse}  
The $h$-vector $(h_0,\dots,h_n)$ of a simplicial cell decomposition of a closed manifold $N$ of dimension $n-1$ satisfies the following:
\[
h_{n-i}-h_i=(-1)^i\big(\chi(N)-\chi(S^{n-1})\big)\binom{n}{i}\qquad (1\le \forall i\le n).
\]
\end{theo}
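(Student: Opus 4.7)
The strategy is to reduce the family of identities (for $i=1,\dots,n$) to a single polynomial identity. Introduce
\[
F(t):=\sum_{i=0}^n h_i t^{n-i}=\sum_{j=0}^n f_{j-1}(t-1)^{n-j},\qquad f_{-1}:=1,
\]
where the second equality is the definition of the $h$-vector. The coefficient of $t^{n-i}$ in $t^n F(1/t)$ is $h_{n-i}$, and the coefficient of $t^{n-i}$ in $(t-1)^n$ is $(-1)^i\binom{n}{i}$, so the claimed identities for all $i$ are equivalent to the single polynomial identity
\[
t^n F(1/t)-F(t)=\bigl(\chi(N)-\chi(S^{n-1})\bigr)(t-1)^n. \tag{$\ast$}
\]
Using the second expression for $F$, one computes $t^n F(1/t)=\sum_j f_{j-1}t^j(1-t)^{n-j}$, hence
\[
t^n F(1/t)-F(t)=\sum_{j=0}^n f_{j-1}(t-1)^{n-j}\bigl[(-1)^{n-j}t^j-1\bigr],
\]
and $(\ast)$ becomes a statement purely about the $f$-vector of $K$.

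To prove $(\ast)$, I would perform a double count of incidence pairs $(\sigma,\tau)$ with $\sigma\subseteq\tau$. Writing $f_{j-1}=\sum_{\tau:\,\dim\tau=j-1}1$ and expanding as $\sum_\sigma\sum_{\tau\supseteq\sigma}$, each face $\sigma$ of dimension $d$ contributes through its link: a face of dimension $j-1$ in $K$ containing $\sigma$ corresponds to a face of dimension $j-2-d$ in $\mathrm{lk}(\sigma)$. Re-summing and collecting the Euler-characteristic-like alternating sums $\sum_{j}(-1)^{\cdot}f_{j-2-d}(\mathrm{lk}(\sigma))$ that emerge from the bracket $[(-1)^{n-j}t^j-1]$, one rewrites the right-hand side as a sum indexed by the faces $\sigma$ of $K$, where the contribution of each $\sigma$ is a constant multiple of $\chi(\mathrm{lk}(\sigma))(t-1)^{n-1-\dim\sigma}$ (with the empty face $\sigma=\emptyset$ included and $\chi(\mathrm{lk}(\emptyset))$ interpreted as $\chi(K)=\chi(N)$).

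The crucial geometric input now enters: for a simplicial cell decomposition of a closed $(n-1)$-manifold $N$, every non-empty face $\sigma$ of dimension $d$ satisfies
\[
\chi(\mathrm{lk}(\sigma))=1+(-1)^{n-2-d}=\chi(S^{n-2-d}),
\]
because a neighbourhood of an interior point of $\sigma$ in $N$ is homeomorphic to $\mathbb R^{n-1}$, which forces the link to be a homology sphere of the indicated dimension. Substituting this into the expression from the double count, the contributions of all non-empty simplices reassemble into $\chi(S^{n-1})(t-1)^n$ -- this is exactly the content of the classical Dehn-Sommerville equations $h_i=h_{n-i}$ for a sphere triangulation -- while the empty simplex contributes $\chi(N)(t-1)^n$. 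Their difference is $(\chi(N)-\chi(S^{n-1}))(t-1)^n$, which is $(\ast)$. The main obstacle in this plan is the link Euler characteristic identity: for general topological (as opposed to PL) manifolds one cannot claim that the links are genuine spheres, so the identity has to be extracted from local homology of the manifold, and in the simplicial cell (simplicial poset) setting one must handle carefully the multiplicities with which lower faces occur inside a given simplex. Once the link identity is available, the remainder of the proof is routine generating function bookkeeping.
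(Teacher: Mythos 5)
The paper does not prove this theorem --- it is quoted with references to Stanley (p.~74 of \cite{stan96}) and Buchstaber--Panov (Theorem 7.44 of \cite{bu-pa02}) --- so your argument can only be measured against the standard proof in those sources, which is indeed the one you are reconstructing: reduce to a polynomial identity, double count pairs $\sigma\le\tau$ using the Boolean lower intervals of a simplicial poset, and feed in the Euler characteristics of links, which for a merely topological manifold come from local homology ($\widetilde H_{j}(\mathrm{lk}(\sigma))\cong H_{j+\dim\sigma+1}(N,N\setminus x)$) rather than from links being spheres. That skeleton is correct, and your two flagged worries are both harmless: the ``multiplicity'' issue disappears because in a simplicial poset every lower interval $[\hat 0,\tau]$ is by definition a Boolean lattice, so a $j$-vertex face has exactly $\binom{j}{k}$ faces with $k$ vertices below it, while the faces above $\sigma$ are exactly the faces of $\mathrm{lk}(\sigma)$.

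However, the bookkeeping does not come out the way you describe, and this hides a genuine missing step. Writing $F(t)=\sum_{\sigma}(t-1)^{\,n-|\sigma|}$ (sum over all faces including $\emptyset$, $|\sigma|$ the number of vertices) and expanding $t^{|\tau|}=\sum_{\sigma\le\tau}(t-1)^{|\tau|-|\sigma|}$, one finds
\[
t^nF(1/t)-F(t)=\sum_{\sigma}(t-1)^{\,n-|\sigma|}\Big[(-1)^{\,n-|\sigma|}\big(1-\chi(\mathrm{lk}(\sigma))\big)-1\Big],
\]
and since $\chi(\mathrm{lk}(\sigma))=1+(-1)^{\,n-1-|\sigma|}$ for $\sigma\neq\emptyset$, every non-empty face contributes $0$; it is not the case that the non-empty faces reassemble into $\chi(S^{n-1})(t-1)^n$ while $\emptyset$ gives $\chi(N)(t-1)^n$. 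The whole right-hand side comes from $\sigma=\emptyset$ and equals $\big((-1)^n(1-\chi(N))-1\big)(t-1)^n$. The constant $(-1)^n(1-\chi(N))-1$ differs from the asserted $\chi(N)-\chi(S^{n-1})$ by $-\chi(N)\big(1+(-1)^n\big)$, so when $n$ is even you still need the additional geometric input that $N$ is then a closed manifold of odd dimension and hence $\chi(N)=0$ (Poincar\'e duality); your proposal never invokes this, and without it the constants simply do not match. The same point undermines your claimed ``equivalence'' of the theorem with the full polynomial identity $(\ast)$: the theorem only asserts the coefficients with $1\le i\le n$, and the $i=0$ coefficient of $(\ast)$ is precisely the relation that requires $\chi(N)\big(1+(-1)^n\big)=0$. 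With the cancellation recorded correctly and the parity/Euler-characteristic step supplied, your outline closes up into the standard proof.
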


The notion of a simplicial cell complex is the one which weakens the notion of a simplicial complex but it seems that simplicial cell complexes have not been so much studied.  One of the reason would be that its relation to geometry was weak.  However, simplicial cell complexes are related to toric topology like simplicial convex polytopes are related to toric geometry.  Therefore, one can expect that results or ideas in topology can be applied to the study of simplicial cell complexes.  Simplicial cell complexes should be studied more, which would also improve our understanding of  simplicial convex polytopes and simplicial complexes.

\section{Counting lattice points}

In this section we will discuss another well-known application of toric geometry to combinatorics, which is counting lattice points in a lattice convex polytope through a moment map.  It turns out that this story in toric geometry can also be generalized to toric topology.  In this generalized setting, moment map still exists and its image is a polytope in some sense but not necessarily convex and may have self-intersection. We will see that results on counting lattice points in a lattice convex polytope hold in this general setting with appropriate modification.    

By \eqref{HomST} we have an identification 
\[
\Hom(T,S^1)=H^2(BT)
\]
and a homomorphism from $T$ to $S^1$ determines a lattice point in the dual $\Lie(T)^*$ to the Lie algebra $\Lie(T)$ of $T$ through differential.  
Therefore, we have 
\begin{equation} \label{LieT*}
\Lie(T)^*=H^2(BT;\R)\supset H^2(BT)=\Hom(T,S^1),
\end{equation}
so a point in the lattice $H^2(BT)$ can be thought of as a complex one-dimensional representation of $T$ and we will denote by $t^u$ the complex one-dimensional representation of $T$ corresponding to $u\in H^2(BT)$.   

To an ample $T$-equivariant complex line bundle $L$ over a toric manifold $X$, a moment map 
$$\Phi_L\colon X\to \Lie(T)^*=H^2(BT;\R)$$ 
is associated and the image $\Phi_L(X)$ is a lattice convex polytope (i.e. a convex polytope with lattice points as vertices), see the following figure.  The dimension of $\Phi_L(X)$ agrees with $\dim_\C X=n$.  

\begin{figure}[h]
\begin{center}
\setlength{\unitlength}{0.6mm}
\begin{picture}(25,35)(10,4)
\put(0,0){\includegraphics[scale=0.9]{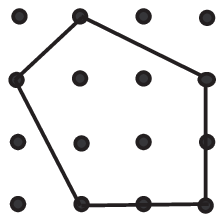}}
\end{picture}
\end{center}
\end{figure}

The moment map image $\Phi_L(X)$ lies in $H^2(BT;\R)$ while the space in which the fan $\Delta_X$ of $X$ lies was $H_2(BT;\R)$ which is dual to $H^2(BT;\R)$.  The relation between $\Phi_L(X)$ and $\Delta_X$ is as follows.  
Since the complex line bundle $L\to X$ is $T$-equivariant, one can apply the Borel construction to the equivariant line bundle and obtains a complex line bundle $ET\times_T L\to ET\times_T X$.  We denote by $c_1^T(L)$ the first Chern class of the resulting line bundle and call it the equivariant first Chern class of $L$.  Since $c_1^T(L)$ is an element of $H^2_T(X)$, one can express it as 
\begin{equation} \label{c1}
c_1^T(L)=\sum_{i=1}^ma_i\tau_i \quad (a_i\in\Z)
\end{equation}
by Proposition~\ref{ring}, and using the integers $a_i$ above we have  
\begin{equation} \label{PhiL}
\Phi_L(X)=\{u\in H^2(BT;\R)\mid \langle u,v_i\rangle \le a_i\quad(i=1,\dots,m)\}
\end{equation}
where $\langle\ ,\ \rangle$ denotes the usual paring between cohomology and homology. 

As is well-known, the set $H^0(X,L)$ of holomorphic sections of the line bundle $L\to X$ forms a finite dimensional complex representation space of $T$.  The following theorem describes this representation space in terms of the moment map $\Phi_L$.

\begin{theo}[See \cite{oda88}, \cite{fult93} for example]　\label{H0} 
$\displaystyle{H^0(X;L)=\sum_{u\in \Phi_L(X)\cap H^2(BT)}t^u}.$
\end{theo}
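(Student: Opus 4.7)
The plan is to exploit $T$-equivariance: decompose $H^0(X;L)$ into $T$-weight spaces, use the dense orbit to identify candidate weight vectors with Laurent monomials, then convert extendability across the characteristic divisors $X_i$ into the linear inequalities defining $\Phi_L(X)$. Since $L$ is $T$-equivariant and $X$ is compact, $H^0(X;L)$ is a finite-dimensional $T$-representation, hence splits as $\bigoplus_u H^0(X;L)_u$ indexed by characters $u\in\Hom(T,S^1)=H^2(BT)$. Restricting to the dense orbit $U\cong\T$, the equivariant line bundle $L|_U$ is equivariantly trivializable, so fixing such a trivialization identifies the weight-$u$ subspace of $H^0(U;L|_U)$ with $\mathbb{C}\cdot\chi^u$, where $\chi^u$ is the Laurent monomial of weight $u$. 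The restriction map $H^0(X;L)\hookrightarrow H^0(U;L|_U)$ is injective, so each $H^0(X;L)_u$ is at most one-dimensional, and equals $\mathbb{C}\cdot\chi^u$ precisely when $\chi^u$, viewed as a rational section of $L$, extends holomorphically across every characteristic divisor $X_i$.

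The heart of the argument is the order-of-vanishing formula
$$\mathrm{ord}_{X_i}(\chi^u)=a_i-\langle u,v_i\rangle.$$
Granting this, extendability along all $X_i$ becomes $\langle u,v_i\rangle\le a_i$ for every $i$, which by \eqref{PhiL} is exactly $u\in\Phi_L(X)$, proving the theorem. To establish the formula I would work locally at a $T$-fixed point $p\in X_i$. Choose $T$-equivariant coordinates $z_1,\dots,z_n$ at $p$ so that the divisors $X_j$ through $p$ are cut out by $z_j=0$; by the normalization in Datum 2 the weight of $z_j$ under $\lambda_{v_k}$ is $-\delta_{jk}$, so the weights $w_1,\dots,w_n$ of the coordinates form the $H^2(BT)$-basis dual to the $v_j$ at $p$. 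An equivariant local trivialization of $L$ near $p$ carries a definite $T$-weight, which can be read off from $c_1^T(L)=\sum_j a_j\tau_j$ by localizing the equivariant Poincar\'e duals $\tau_j$ at $p$: the weight is $\sum_{j:\,p\in X_j}a_j w_j$. Comparing the weight $u$ of $\chi^u$ against this local trivialization then yields the claimed order along $X_i$.

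The main obstacle is the weight-and-sign bookkeeping in the local step: one must align the normalizations for the normal bundle action (Datum 2), the coefficients $a_i$ in $c_1^T(L)$, and the localization of $\tau_j$ at a fixed point so that the resulting inequality matches \eqref{PhiL} with the correct orientation. Once the local formula is verified at one fixed point, both sides are intrinsic invariants of $L$ and $X_i$, so it propagates globally, and assembling the inequalities over all $i$ completes the proof.
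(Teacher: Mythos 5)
The paper does not actually prove Theorem~\ref{H0}; it cites \cite{oda88} and \cite{fult93}, and your argument is exactly the standard proof found there: decompose $H^0(X;L)$ into $T$-weight spaces, use injectivity of restriction to the open dense orbit to see that each weight space is at most one-dimensional and spanned by a Laurent monomial, and convert holomorphic extendability across the characteristic divisors $X_i$ into the inequalities $\langle u,v_i\rangle\le a_i$ of \eqref{PhiL} via the order-of-vanishing formula. Your outline is correct; the only part left implicit is the local sign/normalization check (aligning Datum 2, the coefficients $a_i$ in $c_1^T(L)=\sum_i a_i\tau_i$, and the weight of the local equivariant trivialization), which you rightly flag as the place where the bookkeeping must be carried out.
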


It follows from the ampleness of the line bundle $L$ that $H^i(X;L)=0$ $(i>0)$, so that the left hand side in the theorem above agrees with the equivariant Riemann-Roch index $RR^T(X;L)$.  In particular, if we forget the $T$-action, we obtain the following.  

\begin{coro} \label{latt}
The number $\#(\Phi_L(X))$ of lattice points in the lattice polytope $\Phi_L(X)$ agrees with the Riemann-Roch number $RR(X;L)$ of $L$. 
\end{coro}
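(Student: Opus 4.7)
The plan is to read off the corollary as a direct consequence of Theorem~\ref{H0} by forgetting the $T$-action and then invoking Hirzebruch--Riemann--Roch together with the vanishing of higher cohomology.

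First I would interpret the statement: $\#(\Phi_L(X))$ means the number of lattice points $\#\bigl(\Phi_L(X)\cap H^2(BT)\bigr)$, since only lattice points in the moment polytope are being counted. With this reading, Theorem~\ref{H0} is an equality of $T$-representations whose underlying complex vector-space dimensions are exactly the two sides of Corollary~\ref{latt}. Concretely, each summand $t^u$ on the right is one-dimensional, so passing to dimensions (equivalently, evaluating the $T$-character at the identity element) yields
\[
\dim_{\mathbb{C}} H^0(X;L) \;=\; \#\bigl(\Phi_L(X)\cap H^2(BT)\bigr).
\]

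Next I would connect $\dim_{\mathbb{C}} H^0(X;L)$ to $RR(X;L)$. By the Hirzebruch--Riemann--Roch theorem applied to the ample line bundle $L$ on the smooth compact complex manifold $X$,
\[
RR(X;L) \;=\; \int_X \operatorname{ch}(L)\,\Td(X) \;=\; \sum_{i\ge 0}(-1)^{i}\dim_{\mathbb{C}} H^{i}(X;L).
\]
As noted immediately after Theorem~\ref{H0}, the ampleness of $L$ forces $H^{i}(X;L)=0$ for $i>0$ (this is Kodaira vanishing; in the toric setting it is also Demazure's vanishing theorem). Hence the alternating sum collapses to $\dim_{\mathbb{C}} H^0(X;L)$, and combining this with the previous display gives the claimed equality
\[
\#\bigl(\Phi_L(X)\cap H^2(BT)\bigr) \;=\; RR(X;L).
\]

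There is no genuine obstacle here: the substantive work has already been done in Theorem~\ref{H0}, and the corollary is essentially the non-equivariant shadow of that identity. The only things to be careful about are (a) flagging that ``number of lattice points'' is the intended meaning of $\#(\Phi_L(X))$, and (b) citing the appropriate vanishing theorem so that $RR(X;L)$, which is a priori an Euler characteristic, actually reduces to $\dim H^0(X;L)$.
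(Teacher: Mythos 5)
Your proposal is correct and follows essentially the same route as the paper: derive the corollary from Theorem~\ref{H0} by using the ampleness of $L$ to kill the higher cohomology (so the Riemann--Roch index reduces to $H^0$) and then forget the $T$-action, i.e.\ pass to dimensions. The only cosmetic difference is that the paper phrases the vanishing step equivariantly (identifying $H^0(X;L)$ with $RR^T(X;L)$ before forgetting the action), whereas you take characters at the identity and invoke Hirzebruch--Riemann--Roch directly; this is the same argument.
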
 

Theorem~\ref{H0} and Corollary~\ref{latt} are the key connecting toric geometry and the problem of counting lattice points in a lattice convex polytope.  We shall explain this story with some examples.   

\begin{theo}[Ehrhart] \label{Ehrh}
Let $P$ be a lattice convex polytope of dimension $n$ and let $q$ be a positive integer.  Then the number $\#(qP)$ of lattice points in the lattice polytope $qP=\{qx\mid x\in P\}$ obtained by dilating $P$ $q$ times is a polynomial in $q$ of degree $n$ with rational coefficients.  Moreover, if we express $\#(qP)=\sum_{i=0}^na_i(P)q^i$ $(a_i(P)\in \Q)$, then $a_n(P)$ is the volume of $P$ and $a_0(P)=1$.  
\end{theo}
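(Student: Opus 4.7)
The plan is to deduce Ehrhart's theorem from Corollary~\ref{latt} by expanding a Hirzebruch--Riemann--Roch computation as a polynomial in $q$. After a lattice translation (which does not change $\#(qP)$ for any $q$), I will realize $P$ as the moment-map image $\Phi_L(X)$ for a projective toric variety $X$ of complex dimension $n$ equipped with an ample $T$-equivariant line bundle $L$. In general the variety produced by the fan of $P$ is a toric orbifold rather than a manifold, but this will cause no essential harm. The key observation is that \eqref{c1} and \eqref{PhiL} give $c_1^T(L^{\otimes q})=q\,c_1^T(L)$ and hence $\Phi_{L^{\otimes q}}(X)=qP$, so Corollary~\ref{latt} yields
\[
\#(qP)=RR(X;L^{\otimes q}) \qquad \text{for every positive integer } q.
\]

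Next I would apply Hirzebruch--Riemann--Roch to write
\[
RR(X;L^{\otimes q})=\int_X e^{q\,c_1(L)}\operatorname{Td}(X)=\sum_{i=0}^n \frac{q^i}{i!}\int_X c_1(L)^i\cup\operatorname{Td}(X)_{n-i},
\]
where $\operatorname{Td}(X)_{n-i}$ denotes the component of $\operatorname{Td}(X)$ in cohomological degree $2(n-i)$. This expression is manifestly a polynomial in $q$ of degree at most $n$ with rational coefficients, which already gives the first assertion of the theorem. The degree is exactly $n$ provided the leading coefficient is nonzero, as I verify next.

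To identify the two distinguished coefficients, note first that the $q^n$-coefficient is $\frac{1}{n!}\int_X c_1(L)^n$. Since $L$ is ample, $c_1(L)$ is represented by a symplectic form $\omega$ for which $\Phi_L$ is a moment map, and the Duistermaat--Heckman pushforward formula (or a direct Fubini calculation in action--angle coordinates on the open dense orbit) gives $\int_X \omega^n = n!\,\vol(P)$; hence $a_n(P)=\vol(P)$, which is strictly positive, confirming that the polynomial has degree exactly $n$. For the constant term, setting $q=0$ yields $a_0(P)=RR(X;\mathcal O_X)=\chi(X,\mathcal O_X)$, and any projective toric variety has $H^i(X,\mathcal O_X)=0$ for $i>0$ together with $H^0(X,\mathcal O_X)=\C$, so $a_0(P)=1$. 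The main obstacle will be that $X$ is smooth only when $P$ is a Delzant polytope; for a general lattice polytope the associated $X$ is only an orbifold. This can be handled either by invoking Kawasaki's orbifold version of Hirzebruch--Riemann--Roch, or by pulling back along a smooth equivariant toric resolution $\widetilde X\to X$ and noting that both sides of the displayed identities depend only on $P$, not on the model chosen.
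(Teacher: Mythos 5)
Your proposal is correct and follows essentially the same route as the paper's outline: realize $P$ as a moment map image, apply Corollary~\ref{latt} together with Hirzebruch--Riemann--Roch to get $\#(qP)=\langle e^{qc_1(L)}\Td(X),[X]\rangle$, and read off $a_n(P)=\langle c_1(L)^n/n!,[X]\rangle=\vol(P)$ and $a_0(P)=$ Todd genus $=1$. Your extra attention to the orbifold case (Kawasaki or a toric resolution) and the Duistermaat--Heckman identification of the leading coefficient only fills in details the paper leaves implicit.
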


\begin{proof}[Outline of the proof using Corollary~\ref{latt}] 
We may assume that $P$ is in $\Lie(T)^*=H^2(BT)$ and find a toric manifold $X$ together with an ample $T$-line bundle $L$ over $X$ whose moment map $\Phi_L\colon X\to H^2(BT)$ has $P$ as the image.  The $q$-fold tensor product $L^q$ is again an ample $T$-line bundle and its associated moment map $\Phi_{L^q}$ has $qP$ as the image.  Therefore, it follows from Corollary~\ref{latt} and Hirzebruch-Riemann-Roch Theorem that   
\begin{equation} \label{qP}
\#(qP)=\#(\Phi_{L^q}(X))=\langle e^{c_1(L^q)}\Td(X),[X]\rangle,
\end{equation}
where $c_1(\ )$ denotes the first Chern class, $\Td(X)$ the Todd class of $X$, and $[X]$ the fundamental class of $X$. 
Since we have $e^{c_1(L^q)}=e^{qc_1(L)}=\sum_{k=0}^nq^kc_1(L)^k/{k!}$, the right hand side of \eqref{qP} is a polynomial in $q$ of degree $n$ with rational coefficients and 
\[
a_0(P)=\text{Todd genus of $X$},\quad a_n(P)=\langle c_1(L)^n/n!,[X]\rangle.  
\]
Here $a_0(P)=1$ because the Todd genus of a toric manifold is $1$.  Finally, it is not difficult to see that $\langle c_1(L)^n/n!,[X]\rangle$ agrees with the volume of $P$. 
\end{proof}

The coefficients $a_i(P)$ are invariants of $P$.  $a_n(P)$ is the volume of $P$ as in Theorem~\ref{Ehrh} and $a_{n-1}(P)$ is known to be half of the relative volume $\vol(\partial P)$ of the boundary $\partial P$ of $P$ which was also proved by Ehrhart. Here the relative volume $\vol(\partial P)$ is sum of the volume of facets $F$ of $P$ and the volume of $F$ is defined by normalizing the volume of a minimal lattice cube of dimension $n-1$ in the hyperplane containing $F$ to be $1$.  The coefficient $a_{n-2}(P)$ is explicitly described in terms of $P$ but complicated (\cite{pomm93}, 
\cite{zieg02}).  The description of the coefficients $a_i(P)$ except $i=0, n-2, n-1, n$ in terms of $P$ seems unknown. 

When $P$ is 2-dimensional, that is, a convex polygon, Theorem~\ref{Ehrh} with $q=1$ implies 
\[
\#(P)=\Area(P)+\frac{1}{2}\#(\partial P)+1
\]
because $a_{n-1}(P)=\frac{1}{2}\vol(\partial P)$ reduces to the number $\#(\partial P)$ of lattice points on the boundary of $P$ when $n=2$.  If we denote by $\#(\Int P)$ the number of lattice points in the interior of $P$, then the identity above can be written as 
\begin{equation} \label{Pick}
\Area(P)=\#(\Int P)+\frac{1}{2}\#(\partial P)-1
\end{equation}
because $\#(P)=\#(\Int P)+\#(\partial P)$.  The identity \eqref{Pick} is well known as Pick's formula. 
Pick's formula holds even for concave polygons but the concave case cannot be proved using toric geometry.  This implies existence of a theory extending toric geometry and one can say that toric topology is the desired theory.  In fact, Pick's formula can be proved in full generality including concave polygons.  Furthermore, even for lattice polygons with self-intersections like the right figure in Figure~\ref{star}, both sides in \eqref{Pick} can naturally be defined and Pick's formula can be generalized to these polygons.  In this generalization, the constant term at the right hand side of \eqref{Pick} is not necessarily $1$ and the geometrical meaning of the constant term becomes clear.    

\begin{figure}[h]
\begin{center}
\setlength{\unitlength}{0.6mm}
\begin{picture}(55,50)(10,5)
\put(-30,0){\includegraphics[scale=0.9]{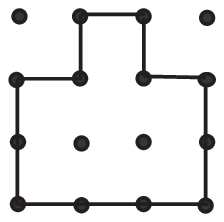}}
\put(40,0){\includegraphics[scale=0.9]{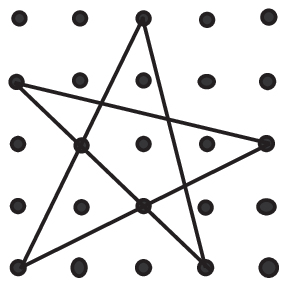}}
\end{picture}
\end{center}
\caption{}\label{star}
\end{figure}


We shall explain how the story above can be developed in toric topology.  When $M$ is a torus manifold and $L$ is a $T$-line bundle, the space of holomorphic sections $H^0(M;L)$ does not make sense but the equivariant Riemann-Roch index $RR^T(M;L)$ and the Riemann-Roch number $RR(M;L)$ can be defined.  $RR^T(M;L)$ is an element of the representation ring $R(T)$ of $T$ and $RR(M;L)$ is an integer.  Moreover, the moment map $\Phi_L$ still exists but its image $\Phi_L(M)$ is not necessarily convex like \eqref{PhiL}.  It can be concave and can have many overlapping as in Figure~\ref{star}.  We can associate an arrangement of hyperplanes 
$$H_i:=\{u\in H^2(BT;\R)\mid \langle u,v_i\rangle=a_i\}\quad (i=1,\dots,m)$$
to the multi-fan $\Delta_M=(K_M,\{v_i\}_{i=1}^m,w)$ of $M$ using \eqref{c1}, and the facets of $\Phi_L(M)$ are contained in this arrangement.  This observation leads us to consider a pair $\P=(\Delta_M,\{H_i\}_{i=1}^m)$ of the multi-fan $\Delta_M$ and the hyperplane arrangement $\{H_i\}_{i=1}^m$.   Such $\P$ is called a \emph{multi-polytope} (\cite{ha-ma03}).  A lattice convex polytope determines an (ordinary) fan and can be regarded as a multi-fan.  

The hyperplane arrangement $\{H_i\}_{i=1}^m$ of a multi-polytope $\P$ divides the entire space $H^2(BT;\R)$ into some regions and a function $\m$ which assigns an integer to each region can be defined using the datum $w$ incorporated in the definition of a multi-fan.  We omit the precise definition of $\m$ but the geometrical meaning of $\m(u)$ is the (signed) multiplicity which measures how many times $\Phi_L(M)$ covers $u$, or the winding number (or mapping degree) which measures how many times the boundary of $\Phi_L(M)$ goes around $u$.  For example, as for the left polygon in Figure~\ref{star}, $\m(u)=0$ if $u$ is outside the polygon, $\m(u)=0$ if $u$ is inside the polygon.  As for the right star shaped polygon in Figure~\ref{star}, $\m(u)=\pm 2$ if $u$ is inside the pentagon, $\m(u)=\pm 1$ if $u$ is outside the pentagon but is in the interior of the star-shaped polygon, and $\m(u)=0$ if $u$ is outside the star-shaped polygon.  A rather delicate argument is necessary in any case when $u$ is on the segments of the polygon.  

In toric geometry, the function $\m$ takes $1$ on $\Phi_L(M)$ and $0$ outside of $\Phi_L(M)$.  Theorem~\ref{H0} is generalized as follows. 

\begin{theo}[\cite{ka-to93}, \cite{gr-ka98}, \cite{masu99}] 
\label{theo:twis}\ 
$\displaystyle{RR^T(M,L)=\sum_{u\in \Phi_L(M)}\m(u)t^u\in R(T)}$. 
\end{theo}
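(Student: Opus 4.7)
The plan is to compute $RR^T(M,L)$ via equivariant localization and then match the resulting expression against the combinatorial definition of $\m$. Since the $T$-fixed point set $M^T$ is a disjoint union of isolated points $M_I=\bigcap_{i\in I}M_i$ over the maximal simplices $I\in K_M$, the Atiyah--Bott--Segal--Singer holomorphic Lefschetz formula gives
\begin{equation*}
RR^T(M,L)\;=\;\sum_{p\in M^T}\frac{t^{u_L(p)}}{\prod_{j=1}^{n}\bigl(1-t^{-w_j(p)}\bigr)},
\end{equation*}
where $u_L(p)\in H^2(BT)$ is the weight of $T$ on the fibre $L_p$ and $w_1(p),\dots,w_n(p)\in H^2(BT)$ are the weights of the isotropy representation on $T_pM$. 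The first task is to read off these weights from the multi-fan: if $p\in M_{I(p)}$ for the maximal simplex $I(p)\in K_M$, the tangential splitting $T_pM=\bigoplus_{i\in I(p)}\nu_i|_p$ together with the omniorientation shows that, up to the sign $\epsilon(p)$, the weights $\{w_j(p)\}$ form the basis of $H^2(BT)$ dual to $\{v_i\}_{i\in I(p)}\subset H_2(BT)$, and by \eqref{c1} one gets $u_L(p)=\sum_{i\in I(p)}a_i\,w^\vee_i(p)$ in this dual basis.

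Next, fix a generic $\xi\in H_2(BT;\R)$ avoiding every hyperplane $\langle w,\cdot\rangle=0$ for all weights $w=w_j(p)$, and polarize each summand by the expansion $1/(1-t^{-w})=\sum_{k\ge 0}t^{-kw}$ when $\langle w,\xi\rangle>0$ and $1/(1-t^{-w})=-\sum_{k\ge 1}t^{kw}$ when $\langle w,\xi\rangle<0$. Each term at $p$ becomes a formal series of the shape $\epsilon_\xi(p)\sum_{u\in C_p\cap H^2(BT)}t^u$ supported on a translated lattice cone $C_p$ with apex a suitable shift of $u_L(p)$. Summing over $p\in M^T$, the coefficient of $t^u$ in the combined formal series is a signed count of the fixed points whose cones $C_p$ contain $u$; I claim this signed count is exactly $\m(u)$.

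To prove the claim I would argue by wall-crossing on $u$. As $u$ crosses a hyperplane $H_i=\{\langle \cdot,v_i\rangle=a_i\}$ in a generic direction, both the signed localization count and the multiplicity $\m$ jump only by contributions from fixed points lying in the codimension-one stratum associated to $M_i$; these two jumps agree because both are governed by the same local data, namely the weights $w(I)$ attached to the top-dimensional cones of $\Delta_M$ adjacent to $H_i$. Since both functions vanish in the unbounded region of $H^2(BT;\R)$ pushed to infinity in the direction $-\xi$, induction on the walls yields equality everywhere; this simultaneously forces the a priori formal sum to be finitely supported and to reconstitute $RR^T(M,L)\in R(T)$, and confirms that the answer is independent of $\xi$.

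The main obstacle is this last combinatorial step. Unlike the toric (convex) case, the multi-polytope $\Phi_L(M)$ is neither convex nor embedded, so different cones $C_p$ can cover the same lattice point with opposite signs, and one must track the product $\epsilon_\xi(p)\epsilon(p)$ carefully to see that contributions do not spuriously cancel. The most delicate situation is when $u$ lies on one of the hyperplanes $H_i$, where the definition of $\m$ invokes a boundary convention (an averaging across walls) that has to be matched with the boundary behaviour of the polarized geometric-series expansion; getting the signs right here is where the real work lies, and it is what ultimately reduces the statement to the wall-crossing induction above.
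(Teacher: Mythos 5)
You should first note that the paper contains no proof of Theorem~\ref{theo:twis}: it is quoted from \cite{ka-to93}, \cite{gr-ka98}, \cite{masu99}, so your proposal can only be compared with the arguments in those sources, and in outline you do follow their strategy (localize $RR^T(M,L)$ at the isolated fixed points, polarize each local term by a generic $\xi$, identify the resulting signed lattice-point count with $\mu$). Two caveats already at the first step: $M$ is a torus manifold, not a complex manifold, so the formula you invoke must be the Atiyah--Segal--Singer fixed point formula for the Dolbeault/Dirac-type operator attached to the $T$-invariant stably complex structure (or omniorientation), and the sign $\epsilon(p)$ then enters each fixed-point contribution explicitly; saying the weights are determined \lq\lq up to the sign $\epsilon(p)$'' postpones exactly the bookkeeping that makes $w(I)=w^+(I)-w^-(I)$ appear in the answer.

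The genuine gap is the combinatorial half of the argument, which you yourself flag as \lq\lq where the real work lies'': the assertion that the signed count of lattice points in the polarized cones equals $\mu(u)$ is not an auxiliary step but is essentially the whole content of the theorem once localization has been invoked, and in \cite{masu99} and \cite{ha-ma03} it is a separately proven Lawrence--Varchenko-type statement identifying the polarized-cone decomposition with the winding-number (Duistermaat--Heckman) function of the multi-polytope. Your wall-crossing induction only asserts that the jumps of the two functions across a hyperplane $H_i$ \lq\lq are governed by the same local data''. To turn this into a proof you would need to (i) check that every facet of every polarized cone $C_p$ lies on one of the hyperplanes $H_j$ with $j\in I(p)$, so both functions are constant on chambers of the arrangement; (ii) actually compute the two jumps across $H_i$ and show they coincide --- this is an identity for the $(n-1)$-dimensional multi-polytope cut out on $H_i$ by the characteristic submanifold $M_i$, so the induction is really on dimension and needs a stated base case; and (iii) fix the convention at lattice points lying on the $H_i$: the references handle these with half-open cones dictated by $\xi$ (a facet is included or excluded according to whether the corresponding geometric series was flipped), not by an averaging convention, and one must then prove the total sum is independent of $\xi$. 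Point (iii) is precisely the \lq\lq delicate argument'' for boundary points that the paper alludes to, and your sketch leaves it, together with (ii), unproven; as written the proposal is a correct plan in the spirit of the cited proofs, but not yet a proof.
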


A generalization of Corollary~\ref{latt} from a topological viewpoint can be obtained from the theorem above by forgetting the $T$-action.  Moreover, Theorem~\ref{Ehrh} by Ehrhart and Pick's formula \eqref{Pick} can be generalized (\cite{ha-ma03}, \cite{masu99}).  The point in these generalizations is to count lattice points with the integer valued function $\m$.  

One can also generalize some other results on counting lattice points in a convex lattice polytope to our setting.  Those are a generalization of Euler-Maclaurin formula which counts lattice points with a polynomial weight function (\cite{ka-st-we05}, \cite{ka-st-we07}),   Khovanskii-Pukhlikov formula (\cite{kh-pu93}, \cite{guil94}) which finds the number of lattice points in a convex polytope through volumes of variation of $P$, and the twelve-point theorem (\cite{po-ro00}) which says that sum of the number of lattice points on the boundary of a lattice convex polygon with only one interior lattice point and that of its dual lattice convex polygon is always 12.  These results hold for multi-polytopes with appropriate modification.

\section{Moment-angle complex and coordinate subspace arrangement}

In Section 2, we explained the correspondence from toric manifolds to fans but since this correspondence is bijective (Theorem~\ref{fund}), there is also a correspondence from fans to toric manifolds, in other words, a construction of toric manifolds from fans.  Complex projective space $\C P^n$ is a typical example of toric manifolds and there are (at least) the following three ways to construct it: 
\begin{enumerate}
\item[(1)] gluing local charts $\C^n$,
\item[(2)] $(\C^{n+1}\backslash\{0\})/\C^*$
\item[(3)] $S^{2n+1}/S^1$
\end{enumerate}
and these three constructions can respectively be extended to general toric manifolds.\footnote{Precisely speaking, toric manifolds need to be projective in (3).} See \cite{fult93}, \cite{oda88} for (1).  Construction (3) is called symplectic quotient, see \cite{guil94}.  We shall explain construction (2) below (\cite{cox95})．

Remember that the fan $\Delta_X$ of a toric manifold consists of two data.  One is a simplicial complex 
\[
K_X=\{ I\subset [m] \mid \bigcap_{i\in I}X_i\not=\emptyset\}.
\]
which encodes intersections of characteristic submanifolds $X_i$ $(i=1,\dots,m)$ of $X$.  The other is the set of vectors $v_i\in H_2(BT_\C)$ which represents the $\C^*$-subgroup of $T_\C$ fixing $X_i$ pointwise.  

To a subset $\sigma=\{i_1,\dots.i_k\}$ of the vertex set $[m]$ of a simplicial complex $K$, we define 
\[
L_\sigma:=\{ (z_1,\dots,z_m)\in\C^m\mid z_{i_1}=\dots=z_{i_k}=0\}
\]
and conisder
\begin{equation} \label{UK}
U(K):=\C^m\backslash \bigcup_{\sigma\notin K}L_\sigma.
\end{equation}
When the fan $\Delta=(K,\{v_i\}_{i=1}^m)$ is complete and non-singular, the toric manifold $X(\Delta)$ corresponding to the fan $\Delta$ is obtained as the orbit space 
$$U(K)/\ker \V=:X(\Delta)$$
of $U(K)$ by the kernel $\ker \V$ of a homomorphism 
$$\V:=\prod_{i=1}^m \lambda_{v_i}\colon (\C^*)^m\to T_\C$$
Even if the simplicial complex $K$ is same, a different choice of vectors $\{v_i\}$ will produce a different toric manifold $X(\Delta)$. For instance, there are infinitely many Hirzebruch surfaces but   
$U(K)=(\C^2\backslash \{0\})\times (\C^2\backslash \{0\})$ in any case. 

It seems that the topology of $U(K)$ has not been studied so much but the above construction shows that the topology of toric manifolds can be used to study the topology of $U(K)$.

One can see from the definition \eqref{UK} of $U(K)$ that $U(K)$ can also be written as 
\[
U(K)=\bigcup_{\sigma\in K}\{(z_1,\dots,z_m)\in \C^m\mid  z_j\not=0 \text{ for $j\notin \sigma$} \}.
\]
Therefore, noting that $(D^2,S^1)$ is a deformation retract of $(\C,\C^*)$, we see that 
\[
\mathcal{Z}_K:=\bigcup_{\sigma\in K}\{(z_1,\dots,z_m)\in (D^2)^m\mid z_j\in S^1\text{ for $j\notin \sigma$} \}
\]
is a deformation retract of $U(K)$.  Although $U(K)$ is non-compact, 
$\mathcal{Z}_K$ is compact.  For example, when $K$ is the boundary complex of a simplex of dimension $n-1$, 
$U(K)=\C^n\backslash\{0\}$ and $\mathcal{Z}_K=S^{2n-1}$. 
The space $\mathcal{Z}_K$ can be thought of as a level set of a moment map and when $U(K)=(\C^2\backslash\{0\})\times (\C^2\backslash\{0\})$ which appears in the construction of Hirzebruch surfaces, $\mathcal{Z}_K=S^3\times S^3$. 

The spaces $U(K)$ and $\mathcal{Z}_K$ can be defined for any simplicial complex $K$ and 
$\mathcal{Z}_K$ is a manifold when the geometric realization of $K$ is a sphere but otherwise $\mathcal{Z}_K$ is not a manifold in general.  
The space $\mathcal{Z}_K$ is called a \emph{moment-angle complex} and Buchstaber-Panov\cite{bu-pa02} used topological technique to study the cohomology ring of $\mathcal{Z}_K$ and hence $U(K)$. 
Panov \cite{pano06} also discusses relation of $\mathcal{Z}_K$ to a Kempf-Ness set known in geometric invariant theory.

\smallskip
Bosio-Meersserman \cite{bo-me04} construct many examples of $U(K)$ whose homology have many torsion elements when $K$ is the boundary complex of a simplicial complex.  This means that the topology of $U(K)$ is complicated (or rich) in general.  Moreover, they produce many examples of non-K\"ahler complex manifolds using $\mathcal{Z}_K$.  Those complex manifolds can be explicitly described as complete intersections of real quadric hypersurfaces in $\C^m$.  

In general, it is difficult to determine the homotopy type of $U(K)$ but completely determined in some cases.  

\begin{theo}[\cite{gr-th06}] \label{grth}
If $K$ is the $(k-1)$-skeleton of a simplex of dimension $m-1$ where $1\le k\le m-1$, then 
\[
U(K)\simeq \bigvee_{j=k+1}^m \binom{m}{j}\binom{j-1}{k}S^{k+j}.
\]
\end{theo}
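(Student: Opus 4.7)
The plan is to work with the moment-angle complex $\mathcal{Z}_K$, which is a deformation retract of $U(K)$; any homotopy equivalence for $\mathcal{Z}_K$ transfers immediately to $U(K)$.

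First I would unpack the combinatorics. The faces of $K = (\Delta^{m-1})^{(k-1)}$ are precisely the subsets $I \subseteq [m]$ with $|I| \le k$, and the minimal non-faces are the $\binom{m}{k+1}$ subsets of cardinality $k+1$. Since every minimal non-face has size at least two, $\mathcal{Z}_K$ is simply connected. Next I would compute the Betti numbers using the Hochster-type formula of Buchstaber--Panov,
\[
\tilde H^{n}(\mathcal{Z}_K;\Z) \;\cong\; \bigoplus_{I \subseteq [m]} \tilde H^{\,n-|I|-1}(K_I;\Z),
\]
where $K_I$ denotes the full subcomplex on the vertex set $I$. For $|I| = j$, the subcomplex $K_I$ is itself the $(k-1)$-skeleton of $\Delta^{j-1}$, which is contractible when $j \le k$ and, by a classical computation, homotopy equivalent to a wedge of $\binom{j-1}{k}$ copies of $S^{k-1}$ when $j > k$. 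Summing over all $I$ of a given size $j$ yields Betti number $\binom{m}{j}\binom{j-1}{k}$ concentrated in degree $k+j$ for each $k+1 \le j \le m$, matching the Betti numbers of the proposed wedge exactly.

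The essential remaining step is to promote this cohomological match to an actual homotopy equivalence; this is where the real work lies, since a simply connected CW complex with the Betti numbers of a wedge of spheres need not itself be such a wedge. I would proceed by induction on $m$, following the strategy of Grbic--Theriault: the $(k-1)$-skeleton of $\Delta^{m-1}$ is obtained from the $(k-1)$-skeleton of $\Delta^{m-2}$ by joining a new vertex $m$ to the $(k-2)$-skeleton, and this translates into a cofibre sequence relating the corresponding polyhedral products. Combined with the James-type splitting $\Sigma(X \times Y) \simeq \Sigma X \vee \Sigma Y \vee \Sigma(X \wedge Y)$ and Porter's theorem for suspensions of wedges of spheres, this inductive step produces candidate maps from standard spheres into $\mathcal{Z}_K$ realising the generators computed above.

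The main obstacle is to verify that the attaching maps that arise in this inductive construction are null-homotopic in the relevant connectivity range, so that the splitting desuspends. Once this is done, assembling the induced maps yields a map from the proposed wedge of spheres into $\mathcal{Z}_K$ that is an integral homology isomorphism, and hence a homotopy equivalence of simply connected CW complexes by Whitehead's theorem. This null-homotopy verification is the technically delicate part: cohomological information alone cannot distinguish wedges of spheres from spaces assembled by higher-order attaching data, and so the argument necessarily uses the polyhedral product structure of $\mathcal{Z}_K$ in an essential way rather than only the face ring.
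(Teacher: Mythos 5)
The paper itself offers no proof of this statement: it is quoted as a result of Grbi\'c--Theriault \cite{gr-th06}, so your proposal can only be measured against that cited argument. Your preliminary work is correct and is the standard reduction: $\mathcal{Z}_K$ is a deformation retract of $U(K)$; the full subcomplex of the $(k-1)$-skeleton of $\Delta^{m-1}$ on a $j$-element vertex set is $\Delta^{j-1}$ (contractible) for $j\le k$ and is homotopy equivalent to a wedge of $\binom{j-1}{k}$ copies of $S^{k-1}$ for $j>k$; feeding this into the Hochster-type formula gives free cohomology of rank $\binom{m}{j}\binom{j-1}{k}$ in degree $k+j$ for $k+1\le j\le m$, which is exactly the cohomology of the asserted wedge. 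That part of your write-up is complete and accurate.

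The genuine gap is the step you explicitly defer, and it is precisely where the theorem lives. Knowing that $\mathcal{Z}_K$ is simply connected with the integral cohomology of a wedge of spheres does not identify its homotopy type: here $\mathcal{Z}_K$ is $2k$-connected but has dimension $m+k$, which for $m$ large is far beyond the range where connectivity forces all attaching maps (Whitehead products and higher phenomena) to vanish. Your proposal names the ingredients of the Grbi\'c--Theriault argument --- the decomposition of the $(k-1)$-skeleton of $\Delta^{m-1}$ as the $(k-1)$-skeleton of $\Delta^{m-2}$ together with the cone on the $(k-2)$-skeleton, the James splitting, Porter's theorem --- but it does not carry out the induction, does not construct the maps from spheres realising the cohomology generators, and does not verify the null-homotopies or the desuspension of the stable splitting; these verifications are the technical core of \cite{gr-th06}, and they use the specific polyhedral-product structure (retractions onto sub-moment-angle complexes, decompositions of half-smash products) in an essential way. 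As written, the decisive homotopy-theoretic step is asserted rather than proved, so the proposal is a correct computation plus a plausible roadmap, not a proof of the theorem.
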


For the simplicial complex $K$ in the theorem above, $U(K)$ is the complement of all codimension $k+1$ coordinate subspaces in $\C^m$.

\section{Rigidity problems for toric manifolds}

Classification of toric manifolds reduces to classification of corresponding fans by the fundamental theorem in toric geometry.  
But a toric manifold is an equivariant object.  So a natural question is \lq\lq If two toric manifolds are non-equivariantly isomorphic as algebraic varieties, then how the corresponding fans are related?".  The answer is known as follows.  If two toric manifolds are non-equivariantly isomorphic, then they are weakly equivariantly isomorphic and hence there is an automorphism of the lattice which sends one fan to the other.  Therefore, the isomorphism classes of toric manifolds as algebraic varieties agree with the isomorphism classes of the corresponding fans.  

\begin{exam} \label{hirz}
Let $\gamma$ be the Hopf line bundle over $\C P^1$ and let $a$ be an integer.  A Hirzebruch surface $H_a$ is obtained as projectivization of the Whitney sum of $\gamma^a$ and the trivial line bundle.  Then the fan of $H_a$ with a suitable $(\C^*)^2$-action is as in the following figure.  The fans of $H_a$ and $H_{-a}$ map to each other through the reflection with respect to the $x$-axis.  Therefore it turns out that $H_a$ and $H_{-a}$ are isomorphic.  In fact, it is known that two Hirzebruch surfaces $H_a$ and $H_b$ are isomorphic if and only if $a=\pm b$.   

\begin{figure}[h]
\begin{center}
\setlength{\unitlength}{1mm}
\begin{picture}(20,0)(55,30)
\put(50,0){\includegraphics[scale=1]{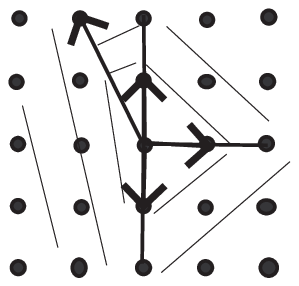}}
\put(60,-5){\large{$(a=2)$}}
\end{picture}
\end{center}
\end{figure}

\end{exam}

\vspace{3.5cm}

As for the classification of toric manifolds as algebraic varieties, we refer the reader to \cite{oda88}, \cite{sato06} and the references therein.  

The classification of toric manifolds reduces to the classification of their fans as explained above, but it also reduces to the classification of their equivariant cohomology as algebras over $H^*(BT)$.  In other words, equivariant cohomology distinguishes toric manifolds (\cite{masu08}).  This leads us to ask what properties of toric manifolds an ordinary cohomology distinguishes.  
One can observe that the diffeomorphism types of toric manifolds are distinguished by their cohomology rings in some cases.  For instance, the Hirzebruch surfaces $H_a$ and $H_b$ in Example~\ref{hirz} are diffeomorphic if and only if the integers $a$ and $b$ are congruent modulo $2$, and one can see that this is equivalent to their cohomology rings being isomorphic.  Based on these observations, we posed the following problem in \cite{ma-su08}.

\medskip
\noindent
{\bf Cohomological rigidity problem.} If two toric manifolds have isomorphic cohomology rings, then are they diffeomorphic (or homeomorphic)? 

\medskip

At first glance, the reader may not believe that the problem is affirmative but no counterexamples are known so far.  Since toric manifolds are simply connected, it follows from Freedmann's theorem on 4-manifolds that toric manifolds of complex dimension 2 are homeomorphic if their cohomology rings are isomorphic.\footnote{Added in the English translation. We can see that they are even diffeomorphic if we use the classification result on toric manifolds of complex dimension 2.}  We also have a partial affirmative solution to the cohomological rigidity problem in \cite{ch-ma-su08-2}.    
For example, the following holds.  

\begin{theo}[\cite{ch-ma-su08-2}] 
A toric manifold whose cohomology ring is isomorphic to that of $\prod_{i=1}^k\C P^{n_i}$ is diffeomorphic to $\prod_{i=1}^k\C P^{n_i}$, where $k$ and $n_i$ are arbitrary natural numbers.  
\end{theo}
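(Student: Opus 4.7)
Set $h^\ast := \Z[x_1, \ldots, x_k]/(x_1^{n_1+1}, \ldots, x_k^{n_k+1})$ with $\deg x_i = 2$, let $n = \sum_{i=1}^k n_i$, and fix once and for all an abstract ring isomorphism $H^\ast(X) \cong h^\ast$. By Theorem~\ref{tcring} we have $H^\ast(X) = \Z[\mu_1, \ldots, \mu_m]/\mathcal I$, with the $\mu_\alpha$ the Poincar\'e duals of the characteristic submanifolds, and comparison of ranks in degree $2$ gives $m = n + k = \sum (n_i + 1)$. My plan proceeds in three stages: combinatorial, geometric, and bundle-theoretic.

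First I would pin down the simplicial complex $K_X$. Each $\mu_\alpha$ lies in $H^2(X) \cong \Z\{x_1, \ldots, x_k\}$, so $\mu_\alpha = \sum_j c_{\alpha j} x_j$ for integers $c_{\alpha j}$, and every Stanley--Reisner relation $\prod_{\alpha \in I}\mu_\alpha = 0$ for $I \notin K_X$ must hold in $h^\ast$, where a monomial $\prod_j x_j^{a_j}$ vanishes iff some $a_j \geq n_j + 1$. Combining this with the facts that the $\mu_\alpha$ generate $H^\ast(X)$ as a ring and that the top class $x_1^{n_1}\cdots x_k^{n_k}$ must equal some $n$-fold product of the $\mu_\alpha$'s, a combinatorial analysis forces $\{\mu_\alpha\}$ to partition into $k$ groups $G_1, \ldots, G_k$ with $|G_j| = n_j+1$, each element of $G_j$ reducing to $\pm x_j$ modulo $\sum_{i\neq j}\Z x_i$. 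The Stanley--Reisner ideal must then coincide with the one coming from the join $K_X = \partial\Delta^{n_1} \ast \cdots \ast \partial\Delta^{n_k}$, whose minimal non-faces are precisely the $G_j$.

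Second, I would translate this combinatorial conclusion into geometry: $X$ is a generalized Bott manifold. A complete non-singular fan whose underlying simplicial complex is this join admits, after an automorphism of $H_2(BT)$ and an ordering of the groups, a normal form in which the vectors $v_\alpha$ for $\alpha \in G_j$ consist of $e_1^{(j)}, \ldots, e_{n_j}^{(j)}$ together with a single ``tautological'' vector of the shape $-\sum_t e_t^{(j)} + \sum_{i<j}(\text{integer combination of } e_\bullet^{(i)})$. A fan in this form is precisely the fan of an iterated projective bundle $X = B_k \to B_{k-1} \to \cdots \to B_0 = \{\mathrm{pt}\}$, in which each stage $B_j \to B_{j-1}$ is the projectivization of $\bigoplus_{i=1}^{n_j} L_i^{(j)} \oplus \underline{\C}$ for complex line bundles $L_i^{(j)}$ whose first Chern classes are recorded by the integer corrections above.

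Third, I would show by induction on $k$ that a generalized Bott manifold with $H^\ast(X) \cong h^\ast$ has all of its line bundles topologically trivial, and hence is diffeomorphic to $\prod_{i=1}^k \C P^{n_i}$. Consider the top fibration $\pi \colon X \to B_{k-1}$: by Leray--Hirsch, $H^\ast(X) = H^\ast(B_{k-1})[y]/\bigl(y\prod_{i=1}^{n_k}(y + c_1(L_i^{(k)}))\bigr)$. The subring $\pi^\ast H^\ast(B_{k-1}) \subset H^\ast(X)$ is characterized intrinsically, for instance as the subring generated by $\{\mu_\alpha \mid \alpha \notin G_k\}$; so it is isomorphic to $\bigotimes_{i<k}\Z[x_i]/(x_i^{n_i+1})$, and then after absorbing an appropriate shift $y \mapsto y - a$ the Leray--Hirsch relation forces every $c_1(L_i^{(k)})$ to vanish. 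Since $B_{k-1}$ is simply connected, each $L_i^{(k)}$ is topologically trivial, so $X \cong B_{k-1} \times \C P^{n_k}$ diffeomorphically, and applying the induction hypothesis to $B_{k-1}$ closes the argument. The hard part will be this third step: the abstract ring isomorphism $H^\ast(X) \cong h^\ast$ does not a priori respect $\pi$, and extracting individual Chern roots from the elementary symmetric functions appearing in the Leray--Hirsch relation, coherently across all $k$ stages of the tower, is the technical heart of the argument.
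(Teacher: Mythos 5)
This survey does not prove the theorem; it only cites \cite{ch-ma-su08-2}, so your proposal has to be measured against that paper. Your first two stages (degree-two rank count $m=n+k$, the face-ring argument forcing $K_X=\partial\Delta^{n_1}\ast\cdots\ast\partial\Delta^{n_k}$, and the normal form showing $X$ is a generalized Bott manifold) follow the same route as \cite{ch-ma-su08-2} and are sound in outline. The genuine gap is in your third stage: the claim that after a shift $y\mapsto y-a$ the Leray--Hirsch relation ``forces every $c_1(L_i^{(k)})$ to vanish,'' hence that each $L_i^{(k)}$ is trivial, is false. The even Hirzebruch surface $H_2=P(\gamma^2\oplus\underline{\C})$ of Example~\ref{hirz} is a counterexample inside the very class you are treating: the relation $y(y+2x)=0$ becomes $(y+x)^2=0$ under $y\mapsto y+x$, so $H^*(H_2)\cong H^*(\C P^1\times \C P^1)$ as rings, yet $\gamma^2$ is a nontrivial line bundle. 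What an abstract ring isomorphism can force is only the vanishing of the elementary symmetric functions of the twisted Chern roots, i.e. that the total Chern class of $\bigl(\bigoplus_i L_i^{(k)}\oplus\underline{\C}\bigr)\otimes L$ equals $1$ for a suitable line bundle $L$ (for $H_2$ the twisted roots are $\pm x$, nonzero but with vanishing symmetric functions). So ``extracting individual Chern roots'' is not merely the hard part you defer --- it is impossible in general, and any argument built on it collapses.

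The correct completion, which is the actual content of \cite{ch-ma-su08-2}, is: use the diffeomorphism $P(E)\cong P(E\otimes L)$ to replace $E=\bigoplus_i L_i^{(k)}\oplus\underline{\C}$ by a twist with total Chern class $1$, and then prove the genuinely nontrivial lemma that a Whitney sum of line bundles over $\prod_{i<k}\C P^{n_i}$ with trivial total Chern class is topologically trivial as a rank-$(n_k+1)$ bundle (for $H_2$: $\gamma\oplus\gamma^{-1}$ is trivial over $\C P^1$ although neither summand is); this gives $X\cong B_{k-1}\times\C P^{n_k}$ without any summand of $E$ being trivial. Two further points in your induction need care: you must show that the hypothesis actually descends, i.e. that $H^*(B_{k-1})\cong\bigotimes_{i<k}\Z[x_i]/(x_i^{n_i+1})$ --- an abstract isomorphism $H^*(X)\cong h^*$ need not carry the subring $\pi^*H^*(B_{k-1})$ to the subring generated by $x_1,\dots,x_{k-1}$, so this requires an argument --- and the existence of the twisting class itself must be extracted from the ring isomorphism; both are part of the work done in \cite{ch-ma-su08-2}.
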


Many invariants treated in topology such as cohomology theory, K-theory and bordism theory are homotopy invariants, and surgery theory classifies manifolds of a fixed homotopy type up to diffeomorphism or homeomorphism.  Therefore, it may be more natural to consider the following problem which is weaker than the cohomological rigidity problem.     

\medskip
\noindent
{\bf Homotopical rigidity problem.} If two toric manifolds are homotopy equivalent, then are they diffeomorphic (or homeomorphic)? 
\medskip

It follows from surgery theory that there are infinitely many closed smooth manifolds which are homotopy equivalent to $\C P^n$ $(n\ge 3)$ but not diffeomorphic to each other (\cite{hsia66} for $n\ge 4$ and \cite{mo-ya66}, \cite{wall66} for $n=3$), but $\C P^n$ is the only toric manifold among them.  Even for a toric manifold $X$ different from $\C P^n$, one can also see that there are infinitely many closed smooth manifolds which are homotopy equivalent to $X$ but not diffeomorphic to each other by applying the technique developed in \cite{hsia66} for $n\ge 4$ and using the results of \cite{wall66} and \cite{jupp73} for $n=3$.  

One can consider problems related to the above rigidity problems.  We shall state two of them.  Related problems can be found in \cite{ma-su08}. 

\medskip
\noindent
{\bf Problem.} Is any cohomology ring isomorphism between two toric manifolds induced by a diffeomorphism between them? \footnote{Added in the English translation.  This is not true in general.  It was known that some cohomology automorphism of $\C P^2\# 10\overline{\C P^2}$ cannot be induced by any diffeomorphism, see R. Friedman and J. W. Morgan, \emph{On the diffeomorphism types of certain algebraic surfaces. I}, 
J. Diff. Geom. 27 (1988), 297--369.}

\medskip
\noindent
{\bf Problem.} Is the decomposition of a toric manifold into a cartesian product of (indecomposable) toric manifolds unique in the smooth category?
Namely, if two cartesian products $\prod_{i=1}^k X_i$ and $\prod_{j=1}^\ell Y_j$ are diffeomorphic, where $X_i$ and $Y_j$ are indecomposable toric manifolds, then $k=\ell$ and $X_i$ and $Y_i$ are diffeomorphic for each $i$ if we change the  order of the products.   

\medskip
The orbit space of a toric manifold by the compact torus $T$ is a simple convex polytope (in most cases).   From this viewpoint, Choi-Panov-Suh\cite{ch-pa-su08} discusses an analogous rigidity problem for simple convex polytopes. \footnote{Added in the English translation: See \cite{ch-ma-su11} for the development on the rigidity problems after the original Japanese version of this article was written.}

\section{Related topics}

Finally we will discuss related topics briefly.  

\medskip
\noindent
(1) {\bf Real toric manifolds.}
A real toric manifold can be defined if we take $\R$ as the ground field instead of $\C$.  If $X$ is a toric manifold, then the set $\XR$ of real points in $X$ is a real toric manifold.  $X$ has an involution taking the complex conjugation on each local coordinate and $\XR$ is the fixed point set of the involution.  Since $X$ has the restricted action of the torus $T=(S^1)^n$, $\XR$ has an action of a 2-torus group $\TR=(S^0)^n$ and $X/T=\XR/\TR$.  

\begin{exam} \label{rBott}
(1) When $X=\C P^n$, $\XR=\R P^n$.  The orbit space $X/T=\XR/\TR$ is an $n$-simplex. 

(2) $If X$ is a Bott manifold $B_n$ in Example~\ref{Bott}, then $\XR$ is the top manifold of an iterated $\R P^1$-bundle obtained by replacing $\C$ by $\R$.  This manifold is called a \emph{real Bott manifold}.  Each $\R P^1$-bundle in the tower becomes trivial if we take a suitable double cover, so $\XR$ has an $n$-dimensional torus as a $2^n$-cover so that $\XR$ provides an example of a flat Riemannian manifold.  The orbit space    
$X/T=\XR/\TR$ is an $n$-cube. 
\end{exam}

Many arguments developed for toric manifolds hold for $\XR$ with suitable modification.  One major difference is that a toric manifold is simply connected while a real toric manifold is never simply connected and provides many examples of aspherical manifolds (a manifold whose universal cover is contractible) like Example~\ref{rBott} (2).   

It is difficult to determine the ring structure of $\XR$ with integer coefficient, but the following holds
\[
H^*(\XR;\Z/2)\cong H^{2*}(X;\Z/2)=H^{2*}(X;\Z)\otimes \Z/2,
\]
so the cohomology ring of $\XR$ with $\Z/2$-coefficient is understood if we combine the above with Theorem~\ref{tcring}.  
Therefore it seems natural to consider the cohomological rigidity problem for real toric manifolds with $\Z/2$-coefficient.  This rigidity problem is affirmatively solved for real Bott manifolds (\cite{ka-ma08}, \cite{masu08-2}\footnote{Added in the English translation. The paper \cite{masu08-2} was improved to \cite{ch-ma-ou10}.}) while there is a counterexample to the rigidity problem (\cite{masu08-3}).  However, the counterexample does not provide a counterexample to the homotopical rigidity problem.  It is also proved in \cite{masu08-2}\footnote{Added in the English translation.  See also \cite{ch-ma-ou10}.} that the two problems mentioned at the end of the last section are affirmative in the category of real Bott manifolds.  

A quasitoric manifold $M$ introduced in Section 4 also admits an involution and its fixed point set $\MR$ is a small cover (\cite{da-ja91}).  A small cover can be thought of as a topological version of a real toric manifold.  Similarly to the toric case, $\MR$ admits an action of the 2-torus group $\TR$ and $Q=M/T=\MR/\TR$.  By the definition of a quasitoric manifold, $Q$ is a simple convex polytope of dimension $n$.  A small cover over $Q$ can be obtained by gluing $2^n$ number of copies of $Q$ along their facets and all the copies appear at each vertex of $Q$.   
When $Q$ is an $n$-cube as in Example~\ref{rBott} (2), a small cover over $Q$ admits a flat Riemannian metric if we realize $Q$ in $\R^n$ in the standard way.  Similarly, if one can realize $Q$ in a hyperbolic space with right angle at each vertex, then a small cover over $Q$ becomes a hyperbolic manifold.  The dodecahedron and the 120 cell can be realized in a hyperbolic space with right angle at each vertex and small covers over them are classified from this point of view in \cite{ga-sc02}.  Small covers of dimension 3 are studied in \cite{ba-lu06}, \cite{lu-yu08}.

\medskip
\noindent
(2) {\bf Quaternionic version of toric manifolds.} 
The family of real toric manifolds contains $\R P^n$ and that of toric manifolds contains $\C P^n$.  Then it is natural to expect that there is a nice family of manifolds which contains the quaternion projective spaces $\H P^n$.  R. Scott\cite{scot95} has developed a quaternionic version of quasitoric manifolds for a family of manifolds containing $\H P^n$ by replacing the group $S^1$ by the group $S^3$, but his trial seems not so successful because of the non-commutativity of $S^3$.     

On the other hand, there is a family of manifolds which can be regarded as a quaternionic version of toric geometry in some sense although the family does not contain $\H P^n$.  Any projective toric manifold can be obtained as the symplectic quotient of $\C^m$ by an action of a torus.  Applying this idea to quaternions, one obtains hypertoric manifolds (or toric hyperK\"ahler manifolds).  A hypertoric manifold has three complex structures corresponding to the $i,j,k$ in the quaternions and is of real $4n$-dimension.  A hypertoric manifold corresponds to a hyperplane arrangement in $\R^n$ like a toric manifold corresponds to a fan, and the cohomology ring of a hypertoric manifold can be described in terms of the associated hyperplane arrangement like the cohomology ring of a toric manifold is described in terms of the associated fan (\cite{konn00}).  We refer the reader to \cite{konn08}, \cite{prou08} for details on hypertoric manifolds.

\medskip
\noindent
(3) {\bf GKM theory.}
If $M$ is a torus manifold with $H^{\rm{odd}}(M)=0$, then the restriction  
\begin{equation} \label{rest}
H^*_T(M)\to H^*_T(M^T)=\bigoplus_{p\in M^T}H^*_T(p)=\bigoplus_{p\in M^T}H^*(BT)
\end{equation}
to the fixed point set $M^T$ is injective but not surjective. Therefore, it is important to determine the image of the restriction map above.  It turns out that the image can be determined by the tangential $T$-representation at each fixed point and 2-spheres fixed pointwise under some codimension one subgroups of $T$.   

The above result is known to hold for a wider class of manifolds with torus actions than torus manifolds.  
The dimension of the acting torus $T$ can be smaller than half of the dimension of the manifold $M$ if the fixed point set $M^T$ is isolated and weights of the tangential $T$-representation at a fixed point are pairwise linearly independent (\cite{go-ko-ma98}, \cite[Chapter 11]{gu-st99}, \cite{fr-pu07}).  
We regard each fixed point as a vertex and a 2-sphere (having two $T$-fixed points) fixed pointwise under some codimension one subgroup of $T$ as an edge. Then we obtain an $n$-valent graph.  Moreover, we attach a weight determined by the tangential representations to each edge, so that we obtain an $n$-valent graph with a direction assigned to each edge.  This graph is called a \emph{GKM graph} named after Goresky-Kottwitz-MacPherson.  According to \cite{go-ko-ma98}, the equivariant cohomology of $M$ is determined by the GKM graph associated with $M$.  Grassmannians and flag manifolds are not toric manifolds in general but they have torus actions satisfying the conditions mentioned above and Schubert calculus on these manifolds can be developed in terms of the associated GKM graphs.  We refer the reader to \cite{gu-za01}, \cite{gu-za03}, \cite{kn-ta03}, \cite{macp07}, \cite{tymo08} on this topic.




\vspace{1cm}

\end{document}